\let\subparagraph\paragraph
\newcommand{\BRGC}{BRGC}
\newcommand{\BRGCREV}{\overline{BRGC}}
\newcommand{\FLIP}[2]{flip_{#1}(#2)}
\newcommand{\SWAP}[3]{swap_{#1}(#2, #3)}
\newcommand{\FLIPTWO}[3]{flip2_{#1}(#2, #3)}
\newcommand{\BINARY}[1]{\mathbf{B}(#1)}
\newcommand{\xxx}[1]{%
  \ifmeasuring@
    % we're in the measuring stage, just use the argument
    #1
  \else
    % we're typesetting, add the strikeout rule
    \sbox0{$\displaystyle#1$}%
    \raisebox{0.8ex}{\rlap{\color{red}%
          \vrule height 0.65pt depth 0.65pt width \wd0}}%
    \box0
  \fi
}
\newcommand{\squishlist}{
 \begin{list}{$\bullet$}
  { \setlength{\itemsep}{0pt}
     \setlength{\parsep}{3pt}
     \setlength{\topsep}{3pt}
     \setlength{\partopsep}{0pt}
     \setlength{\leftmargin}{2.5em}
     \setlength{\labelwidth}{1em}
     \setlength{\labelsep}{0.5em} } }
\newcommand{\squishlisttwo}{
 \begin{list}{$\triangleright$}
  { \setlength{\itemsep}{0pt}
     \setlength{\parsep}{0pt}
    \setlength{\topsep}{0pt}
    \setlength{\partopsep}{0pt}
    \setlength{\leftmargin}{2em}
    \setlength{\labelwidth}{1.5em}
    \setlength{\labelsep}{0.5em} } }
\newcommand{\squishend}{
  \end{list}  }
\definecolor{verbgray}{gray}{0.9}
\definecolor{shadecolor}{rgb}{.9, .9, .9}
\begin{document}

\title{Inside the Binary Reflected Gray Code: \\ Flip-Swap Languages in 2-Gray Code Order}
%\title{Generating cyclic $2$-Gray codes in reflected Gray code order}
\date{}
\author{
Joe Sawada
%\thanks{School of Computer Science, University of Guelph, Canada. Research supported by NSERC. \texttt{ email:jsawada@uoguelph.ca}}
\ \ \   \
Aaron Williams
%\thanks{  \texttt{ email:  }}
 \ \ \  \
Dennis Wong %\thanks{  \texttt{ email:  }}
\institute{}
}
\maketitle

\begin{abstract}
A \emph{flip-swap language} is a set ${\bf S}$ of binary strings of length $n$ such that ${\bf S} \cup \{0^n\}$ is closed under two operations (when applicable):  (1) Flip the leftmost $1$; and  (2) Swap the leftmost $1$ with the bit to its right.
%Natural representations of many combinatorial objects are flip-swap languages including:
Flip-swap languages model many combinatorial objects including
necklaces,  Lyndon words, prefix normal words, left factors of $k$-ary Dyck words, and feasible solutions to $0$-$1$ knapsack problems.
We prove that any flip-swap language forms a cyclic $2$-Gray code when listed in binary reflected Gray code (BRGC) order.
Furthermore, a generic successor rule computes the next string when provided with a membership tester. % to compute the next string.
The rule generates each string in the aforementioned flip-swap languages in $O(n)$-amortized per string, except for prefix normal words of length $n$ which require $O(n^{1.864})$-amortized per string.
Our work generalizes results on necklaces and Lyndon words by Vajnovski [Inf. Process. Lett. 106(3):96$-$99, 2008].
\end{abstract}

%\section{Successor rule for necklaces in lexicographic order} \label{sec:gen_n-lex}
\section{Introduction} \label{sec:intro}
Combinatorial generation studies the efficient generation of each instance of a combinatorial object, such as the $n!$ permutations of $\{1,2,\ldots,n\}$ or the $\frac{1}{n+1}\binom{2n}{n}$ binary trees with $n$ nodes.
The research area is fundamental to computer science and it has been covered by textbooks such as \emph{Combinatorial Algorithms for Computers and Calculators} by Nijenhuis and Wilf \cite{WilfBook}, \emph{Concrete Mathematics: A Foundation for Computer Science} by Graham, Knuth, and Patashnik~\cite{concrete}, and \emph{The Art of Computer Programming, Volume 4A, Combinatorial Algorithms} by Knuth~\cite{opac-b1101743}.
In fact, Knuth's section on \emph{Generating Basic Combinatorial Patterns} is over 450 pages.
The subject is important to every day programmers, and Arndt's \emph{Matters Computational: Ideas, Algorithms, Source Code} is an excellent practical resource \cite{MattersComputational}.
A primary consideration is listing the instances of a combinatorial object so that consecutive instances differ by a specified \emph{closeness condition}. % involving a constant amount of change.
Lists of this type are called \emph{Gray codes}.
This terminology is due to the eponymous \emph{binary reflected Gray code} (\emph{BRGC}) by Frank Gray, which orders the $2^n$ binary strings of length $n$ so that consecutive strings differ in one bit.
The BRGC was patented for a pulse code communication system in 1953~\cite{gray-pulse-code-communication-1953}.
For example, the order for $n=4$ is
\begin{equation} \label{eq:BRGC4}
\begin{aligned}
0000, 1000, 1100, 0100, 0110, 1110, 1010, 0010, \\ 0011, 1011, 1111, 0111, 0101, 1101, 1001, 0001.
\end{aligned}
\end{equation}
Variations that reverse the entire order or the individual strings are also commonly used in practice and in the literature.
We note that the order in \eqref{eq:BRGC4} is \emph{cyclic} because the last and first strings also differ by the closeness condition, and this property holds for all $n$.

One challenge facing combinatorial generation is its relative surplus of breadth and lack of depth\footnote{This is not to say that combinatorial generation is always easy.
For example, the `middle levels` conjecture was confirmed by M\"{u}tze \cite{middleLMS} after 30 years and effort by hundreds of researchers.}.
%As a result, practicioners must learn a variety of results, and it is telling that
%It is also telling that
For example, \cite{MattersComputational}, \cite{opac-b1101743}, and \cite{WilfBook} have separate subsections for different combinatorial objects, and the majority of the Gray codes are developed from first principles.
Thus, it is important to encourage simple frameworks that can be applied to a variety of combinatorial objects.
Previous work in this direction includes the following:
\begin{enumerate}
\item the ECO framework developed by Bacchelli, Barcucci, Grazzini, and Pergola~\cite{Bacchelli2004} that generates Gray codes for a variety of combinatorial objects such as Dyck words in constant amortized time per instance;
\item the twisted lexico computation tree by Takaoka~\cite{DBLP:conf/isaac/Takaoka99} that generates Gray codes for multiple combinatorial objects in constant amortized time per instance; %including multiset permutations,
\item loopless algorithms developed by Walsh~\cite{DBLP:conf/dmtcs/Walsh03} to generate Gray codes for multiple combinatorial objects, which extend algorithms initially given by Ehrlich in~\cite{Ehrlich:1973:LAG:321765.321781};
\item greedy algorithms observed by Williams~\cite{GreedyWADS} that provide a uniform understanding for many previous published results;
\item the reflectable language framework by Li and Sawada~\cite{Li2009296} for generating Gray codes of $k$-ary strings, restricted growth strings, and $k$-ary trees with $n$ nodes; %internal
\item the bubble language framework developed by Ruskey, Sawada and Williams~\cite{Ruskey2012155} that provides algorithms to generate shift Gray codes for fixed-weight necklaces and Lyndon words, $k$-ary Dyck words, and representations of interval graphs;
\item the permutation language framework developed by Hartung, Hoang, M\"{u}tze and Williams~\cite{10.5555/3381089.3381163} that provides algorithms to generate Gray codes for a variety of combinatorial objects based on encoding them as permutations.
\end{enumerate}

We focus on an approach that is arguably simpler than all of the above: % from a conceptual point of view.
Start with a known Gray code and then \emph{filter} or \emph{induce} the list based on a subset of interest.
In other words, the subset is listed in the relative order given by a larger Gray code, and the resulting order is a \emph{sublist (Gray code)} with respect to it.
Historically, the first sublist Gray code appears to be the \emph{revolving door} Gray code for combinations \cite{Wilf}.
A \emph{combination} is a length $n$ binary string with \emph{weight} (i.e. number of ones) $k$.
The Gray code is created by filtering the BRGC, as shown below for $n=4$ and $k=2$ (cf. \eqref{eq:BRGC4})
\begin{equation} \label{eq:RevolvingDoor42}
\begin{aligned}
\cancel{0000}, \cancel{1000}, 1100, \cancel{0100}, 0110, \cancel{1110}, 1010, \cancel{0010}, \\ 0011, \cancel{1011}, \cancel{1111}, \cancel{0111}, 0101, \cancel{1101}, 1001, \cancel{0001}.
\end{aligned}
\end{equation}
This order is a \emph{transposition Gray code} as consecutive strings differ by transposing two bits\footnote{When each string is viewed as the incidence vector of a $k$-subset of $\{1,2,\ldots,n\}$, then consecutive $k$-subsets change via a ``revolving door'' (i.e. one value enters and one value exits).}.
It can be generated \emph{directly} (i.e. without filtering) by an efficient algorithm~\cite{Wilf}.
Transposition Gray codes are a special case of \emph{2-Gray codes} where consecutive strings differ by flipping (i.e. complementing) at most two bits.
Vajnovszki~\cite{vaj} proved that necklaces and Lyndon words form a cyclic $2$-Gray code in BRGC order, and efficient algorithms can generate these sublist Gray codes directly \cite{neck-sww}.
Our goal is to  expand upon the known languages that are 2-Gray codes in BRGC order, and which can be efficiently generated.
To do this, we introduce a new class of languages.

A \emph{flip-swap language} (with respect to 1) is a set ${\bf S}$ of length $n$ binary strings such that ${\bf S} \cup \{0^n\}$ is closed under two operations (when applicable):
(1) Flip the leftmost $1$; and
(2) Swap the leftmost $1$ with the bit to its right.
A flip-swap language with respect to $0$ is defined similarly. %by changing $0$ and $1$.
Flip-swap languages encode a wide variety of combinatorial objects.
%The formal definitions of these languages are given in Section~\ref{sec:flip-swap}.

%===============================
\begin{theorem} \label{thm:main1}
The following sets of length $n$ binary strings are flip-swap languages:

\begin{tabular}{p{0.51\textwidth}p{0.49\textwidth}}
\small
{\bf Flip-Swap languages (with respect to $1$)}
\squishlisttwo
\item[i.]     all strings
\item[ii.]    strings with weight $\leq k$
\item[iii.]   strings $\leq \gamma$
\item[iv.]    strings with $\leq k$ inversions re: $0^*1^*$ % apropos means "with respect to"
\item[v.]     strings with $\leq k$ transpositions re: $0^*1^*$
\item[vi.]    strings $<$ their reversal
\item[vii.]   strings $\leq$ their reversal (neckties)
\item[viii.]  strings $<$ their complemented reversal
\item[ix.]    strings $\leq$ their complemented reversal
\item[x.]     strings with forbidden $10^t$
\item[xi.]    strings with forbidden prefix $1\gamma$
\item[xii.]   $0$-prefix normal words
\item[xiii.]  necklaces (smallest rotation)
\item[xiv.]   Lyndon words
\item[xv.]    prenecklaces (smallest rotation)
\item[xvi.]   pseudo-necklaces with respect to $0^*1^*$
\item[xvii.]  left factors of $k$-ary Dyck words %prefixes of balanced parentheses,
\item[xviii.] feasible solutions to 0-1 knapsack problems %, and
%\item union, intersection and quotient of the above sets.
\squishend
&
\small
{\bf Flip-Swap languages (with respect to~$0$)}
\squishlisttwo
\item all strings
\item strings with weight $\geq k$
\item strings $\geq \gamma$
\item strings with $\leq k$ inversions re: $1^*0^*$
\item strings with $\leq k$ transpositions re: $1^*0^*$
\item strings $>$ their reversal
\item strings $\geq$ their reversal
\item strings $>$ their complemented reversal
\item strings $\geq$ their complemented reversal
\item strings with forbidden $01^t$
\item strings with forbidden prefix $0\gamma$
\item $1$-prefix normal words
\item necklaces (largest rotation)
\item aperiodic necklaces (largest rotation)
\item prenecklaces (largest rotation)
\item pseudo-necklaces with respect to $1^*0^*$
\squishend
\end{tabular}
\end{theorem}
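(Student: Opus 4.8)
The plan is to verify directly, for each language $\mathbf{S}$ in the two columns, that $\mathbf{S} \cup \{0^n\}$ is closed under the two operations. First I would rewrite the operations concretely: writing a nonzero string as $s = 0^i 1 w$ (so its leftmost $1$ sits in position $i+1$), operation (1) sends $s$ to $0^{i+1}w$, and operation (2) is nontrivial only when $w = 0w'$, in which case it sends $0^i 1 0 w'$ to $0^{i+1} 1 w'$; when $w$ begins with $1$ the swap fixes $s$, and when $w$ is empty it does not apply. The following facts then drive most of the proof: both operations yield a lexicographically smaller string whenever they change $s$; operation (1) decreases the weight by one while operation (2) preserves it; operation (1) destroys precisely the inversions involving the deleted $1$, and operation (2) destroys exactly one inversion relative to $0^*1^*$ (the leftmost $10$). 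I would also record the complementation symmetry: bit-complementing every string carries the leftmost $1$ to the leftmost $0$ and exchanges $0^n$ with $1^n$, so $\mathbf{S}$ is a flip-swap language with respect to $1$ if and only if $\overline{\mathbf{S}}$ is a flip-swap language with respect to $0$. Since the right column is the bit-complement of the left column (after the harmless relabellings $k \mapsto n-k$ and $\gamma \mapsto \overline{\gamma}$), this reduces the theorem to the left column.

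The easy families then fall out from these statistics. Items (i), (ii), (iv) and (v) hold because weight, inversions, and the transposition count relative to $0^*1^*$ are all non-increasing under both operations, while (iii) holds because both operations only decrease a string lexicographically. For (xi) the image of either operation begins with $0^{i+1}$, hence with a $0$, so it can never exhibit the forbidden prefix $1\gamma$; for (x) I would check that neither flipping the leftmost $1$ nor sliding it one place to the right can create a factor $10^t$, using that the suffix after the leftmost $1$ is left intact. The two object families are equally direct: for (xvii) every prefix of the image has at most as many $1$s (and at least as many $0$s) as the corresponding prefix of $s$, so any ballot-type prefix inequality defining left factors of $k$-ary Dyck words is preserved; for (xviii), with the items indexed by non-increasing weight, operation (1) drops an item and operation (2) replaces a chosen item by a no-heavier one, so feasibility is maintained.

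The real work is in the remaining families, and I expect these to be the main obstacle. For the reversal languages (vi)--(ix) the operations move both $s$ and its (complemented) reversal, so no single monotonicity suffices; instead I would compare the lengths of the leading $0$-runs of the image and of its reversal. Concretely, for (vii) one shows that the hypothesis $s \le \mathrm{rev}(s)$ forces the leading $0$-run of $s$ to be at least as long as its trailing $0$-run, and that after either operation the image acquires a strictly longer leading $0$-run than its reversal, whence the image still precedes its reversal; cases (vi), (viii), (ix) follow by the same leading-run bookkeeping with complementation inserted where needed. For the necklace family (xiii)--(xvi) and the prefix-normal words (xii) the defining condition is global -- a comparison against all rotations, or against all equal-length factors -- so a prefix-local operation is not obviously safe. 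Here I would exploit that a necklace is the lexicographically least rotation and hence begins with a longest $0$-run, analyse how deleting or shifting the leftmost $1$ affects each rotation, and extend the bubble-language machinery of Ruskey, Sawada and Williams; the prefix-normal case, which is the source of the weaker running time quoted in the abstract, requires separately tracking the $0$-count of every prefix. Assembling these per-language verifications, together with the complementation symmetry, completes the proof.
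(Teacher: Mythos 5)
Your overall plan coincides with the paper's: verify the two closure properties language by language using monotone statistics (lexicographic order, weight, inversions, transpositions), and dispose of the right-hand column by complementation, which the paper invokes implicitly when it says to interchange the roles of the $0$s and $1$s. The easy cases and your leading-$0$-run bookkeeping for the reversal languages (vi)--(ix) are fine (the paper handles (viii)--(ix) even more simply, by a sandwich: the flip makes the string smaller and its complemented reversal larger). However, your argument for item (xvii) is genuinely wrong. Under the paper's definition, a $k$-ary Dyck word requires every prefix to have at most $k-1$ copies of $0$ for every $1$; this ballot inequality bounds the $0$s \emph{from above} in terms of the $1$s, so an operation that gives every prefix more $0$s and fewer $1$s violates it rather than preserves it---your claimed preservation runs in the wrong direction. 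Concretely, $1010$ is a left factor of a $2$-ary Dyck word, but flipping its leftmost $1$ gives $0010$, whose prefix $00$ already fails the condition, and $0010 \neq 0^4$. Indeed every left factor must begin with $1$ (its length-one prefix must satisfy the ballot condition), so flip-first expels essentially every string from the set: left factors of $k$-ary Dyck words are a flip-swap language with respect to $0$, not $1$, and that is exactly how the paper's appendix proves this item (the column placement in the theorem statement is inconsistent with the appendix proof; your proposal inherits that misplacement and ``proves'' the false version).

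Second, the families you defer as ``the real work''---(xii)--(xvi)---are precisely the ones the paper settles with short, direct arguments, and the tool you propose cannot carry them. For a necklace $\alpha = 0^j 1 b_{j+2}\cdots b_n$, the flip yields $0^{j+1}b_{j+2}\cdots b_n$ and the swap (when $b_{j+2}=0$) yields $0^{j+1}1 b_{j+3}\cdots b_n$; both are again necklaces by comparing each rotation's leading $0$-run against the now-longer leading run, a few lines of case analysis. Lyndon words then follow by a two-line periodicity contradiction, prenecklaces because the operations preserve being a prefix of a necklace, pseudo-necklaces by comparing blocks, and $0$-prefix normal words because each operation weakly increases the $0$-count of every prefix while any substring gains at most as much. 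Your proposed extension of the bubble-language machinery of Ruskey, Sawada and Williams is pointed at the wrong target: bubble languages are \emph{fixed-weight} languages closed under transposing the first $01$/$10$, which at best mirrors the swap-first property, whereas the flip-first property changes the weight and so lies entirely outside that framework. As written, then, the proposal leaves the central cases of the theorem (the ones proved in the paper's body, necklaces and prefix normal words among them) unestablished, and its one fully committed argument for (xvii) fails.
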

%===============================

% Later in this paper we will demonstrate that the flip-swap languages including $0^n$ correspond to the ideals of a partially ordered set. These include all the languages in Theorem~\ref{thm:main1}   except for Lyndon words (aperiodic necklaces) and binary strings that are strictly less (greater) then their reversals.

Our second result is that every flip-swap language forms a cyclic $2$-Gray code when listed in BRGC order. % (with respect to 0 or 1 accordingly)
This generalizes the previous sublist BRGC results \cite{neck-sww,vaj}.

%===============================
\begin{theorem} \label{thm:main2}
When a flip-swap language $\mathbf{S}$ is listed in BRGC order the resulting listing is a 2-Gray code.  If $\mathbf{S}$ includes $0^n$ then the listing is cyclic.
\end{theorem}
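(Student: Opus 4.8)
The plan is to induct on $n$, using the recursive structure of the BRGC. Writing $B_n$ for the length-$n$ BRGC order, it splits by its final bit as $B_n = B_{n-1}\cdot 0,\ \overline{B_{n-1}}\cdot 1$, where $\cdot b$ appends the bit $b$ to every string of a list and the overline reverses a list; its first and last strings are $0^n$ and $0^{n-1}1$. I will argue for flip-swap languages with respect to $1$; the case with respect to $0$ follows from an analogous induction. Given such an $\mathbf{S}$, I would reduce the theorem to three facts: (a) the two ``halves'' of $\mathbf{S}$ are themselves flip-swap languages, so that induction applies; (b) the single transition joining them flips at most two bits; and (c) when $0^n\in\mathbf{S}$, the wrap-around from the last string to $0^n$ flips at most two bits.

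For (a), set $\mathbf{S}_0=\{w : w0\in\mathbf{S}\}$ and $\mathbf{S}_1=\{w : w1\in\mathbf{S}\}$ over length $n-1$. Appending a constant bit to a string $w\neq 0^{n-1}$ does not move its leftmost $1$, so flipping or swapping the leftmost $1$ inside $w$ is mirrored by the same operation inside $w0$ (respectively $w1$); the only delicate case is when the leftmost $1$ of $w$ occupies its final position, but then the swap is \emph{not applicable} to $w$ and imposes no closure condition. Hence $\mathbf{S}_0$ and $\mathbf{S}_1$ are flip-swap languages, and filtering $B_n$ by $\mathbf{S}$ yields exactly $L_0\cdot 0$ followed by $\overline{L_1}\cdot 1$, where $L_i$ is the BRGC listing of $\mathbf{S}_i$. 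By the inductive hypothesis each $L_i$ is a $2$-Gray code, and since reversal and appending a constant bit both preserve this property, each of the two blocks is internally a $2$-Gray code.

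The heart of the argument is a lemma pinning down the last string: \emph{if a flip-swap language contains any string of weight at least $1$, then it contains $0^{n-1}1$}, which, being globally last in $B_n$, is then the last string of its listing. I would prove this by reachability: from any nonzero $x\in\mathbf{S}$, repeatedly flipping the leftmost $1$ reaches the single-$1$ string $0^{p-1}10^{n-p}$, where $p$ is the rightmost $1$ of $x$, and then repeatedly swapping the leftmost $1$ rightward reaches $0^{n-1}1$; every intermediate string has weight at least $1$, hence lies in $\mathbf{S}$ by closure. Applying this to $\mathbf{S}_0$ and $\mathbf{S}_1$, the last strings $u$ of $L_0$ and $v$ of $L_1$ each lie in $\{0^{n-1},\,0^{n-2}1\}$ (the former exactly when the sub-language is $\{0^{n-1}\}$). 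The junction strings are $u\cdot 0$ and $v\cdot 1$, and checking the four combinations shows they always differ in at most two bits, giving (b). For (c), if $0^n\in\mathbf{S}$ the listing begins at $0^n$ and, by the lemma, ends at $0^{n-1}1$, which differ in a single bit; the base case $n=1$ is immediate.

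I expect the last-string lemma, together with its deployment at the junction, to be the main obstacle. One must both show that $0^{n-1}1$ is \emph{forced} into every nontrivial flip-swap language and handle the degenerate sub-languages $\emptyset$ and $\{0^{n-1}\}$ with care, since these are precisely the configurations that create genuine two-bit transitions (for instance, the block junction $0^n \to 0^{n-2}11$ that arises for necklaces). By comparison, verifying that $\mathbf{S}_0$ and $\mathbf{S}_1$ inherit the flip-swap property is routine once the leftmost-$1$ bookkeeping is in place, and the mirror-image treatment of languages with respect to $0$ runs along identical lines.
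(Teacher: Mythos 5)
Your proof is correct, but it takes a genuinely different route from the paper's. The paper obtains Theorem~\ref{thm:main2} as a corollary of an explicit successor rule: Theorem~\ref{thm:big} shows, using Vajnovszki's parity characterization of BRGC order (Lemma~\ref{lem:gray}) together with Lemmas~\ref{thm:even} and~\ref{thm:odd} locating the rightmost changed bit, that the string following $\alpha$ in $\mathcal{BRGC}({\bf S})$ is obtained from $\alpha$ by complementing at most two explicitly identified bits (near $t_\alpha$ when $w(\alpha)$ is even, near $\ell_\alpha+1$ when it is odd). You instead induct on $n$ through the recursive definition of the BRGC, and your two ingredients actually appear in the paper in other roles: your fact (a), that the halves $\mathbf{S}_0=\mathbf{S}/0$ and $\mathbf{S}_1=\mathbf{S}/1$ are again flip-swap languages, is the quotient case of Lemma~\ref{lem:closure} (proved there via the poset/ideal machinery), and your last-string lemma is exactly Remark~\ref{lem:last}, for which your flip-then-swap reachability argument supplies the proof the paper leaves implicit. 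The trade-off is clear: your induction is more elementary and self-contained --- it needs no parity lemma, and it makes transparent that genuine two-bit transitions occur only at the block junction, precisely when a sub-language degenerates to $\{0^{n-1}\}$ --- but it is non-constructive, establishing that the listing is a (cyclic) 2-Gray code without producing a way to compute successors. The paper's heavier route is what powers the generic generation algorithm and the amortized running-time bounds (Theorems~\ref{thm:main3} and~\ref{lem:n2}), which your argument alone could not support.
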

%===============================

Our third result is a generic successor rule, which efficiently computes the next string in the $2$-Gray code of a flip-swap language, so long as a fast membership test is given.
%This leads to the following.

%===============================
\begin{theorem} \label{thm:main3}
The languages in Theorem~\ref{thm:main1} can be generated in $O(n)$-amortized time per string, with the exception of prefix normal words which require $O(n^{1.864})$-time. % per string.
\end{theorem}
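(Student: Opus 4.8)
The plan is to combine the generic successor rule (whose correctness follows from Theorem~\ref{thm:main2}) with a per-language analysis of the cost of a single membership test. First I would make the successor rule explicit: given a string $\alpha \in \mathbf{S}$, I want to compute its successor in the BRGC-filtered listing of $\mathbf{S}$ using only a bounded number of calls to a membership oracle for $\mathbf{S}$. The crucial input here is the proof of Theorem~\ref{thm:main2}: since consecutive members of $\mathbf{S}$ differ in at most two bits, and the affected bits sit at or immediately to the right of the leftmost $1$ of $\alpha$, the successor is one of a constant number of explicitly described candidate strings obtained by flipping the leftmost $1$ and/or swapping it with its right neighbour. The rule therefore forms these $O(1)$ candidates and selects the correct one with $O(1)$ membership queries.

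Next I would bound the non-oracle work. Locating the leftmost $1$ and forming the candidates costs $O(n)$ per string, and this scanning can be reduced by maintaining the position of the leftmost $1$ across successive calls, so that it is $O(n)$ amortized at worst. Hence each successive string is produced in $O(n)$ time plus a constant number of membership tests. Writing $T(n)$ for the cost of one membership test, the amortized cost per generated string is $O(n + T(n))$.

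The remaining work is a membership-testing catalogue for the families in Theorem~\ref{thm:main1}. For entries i--xi and xiii--xviii I would exhibit an $O(n)$-time test: the weight, lexicographic, inversion-count, and transposition-count conditions, the reversal and complemented-reversal comparisons, the forbidden-factor and forbidden-prefix conditions, and the $0$-$1$ knapsack and $k$-ary Dyck-prefix conditions are all single-scan checks, while membership in the necklace, Lyndon, prenecklace, and pseudo-necklace families is decidable in linear time by standard smallest/largest-rotation tests. Substituting $T(n) = O(n)$ into $O(n + T(n))$ gives the claimed $O(n)$-amortized bound for these languages. For $0$-prefix (respectively $1$-prefix) normal words I would instead invoke the known subquadratic prefix-normality test running in $O(n^{1.864})$ time, yielding $T(n) = O(n^{1.864})$ and hence the stated exception.

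The step I expect to be the main obstacle is the first one: proving that a constant number of membership queries suffices, that is, that the successor can be computed \emph{directly} rather than by walking through the (possibly many) non-members of $\mathbf{S}$ that separate two consecutive members in BRGC order. This is exactly where the structural content of Theorem~\ref{thm:main2} must be converted into an explicit, case-based successor formula; getting the candidate set and the tie-breaking among candidates right, and handling the boundary case $\alpha = 0^n$ for the cyclic wrap-around, is the delicate part. Everything after that is bookkeeping together with the routine verification of the linear-time membership tests above.
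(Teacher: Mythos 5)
Your plan breaks at exactly the step you flagged as the main obstacle, and it breaks for a structural reason rather than a bookkeeping one. It is not true that the bits changed between consecutive members of $\mathbf{S}$ ``sit at or immediately to the right of the leftmost $1$ of $\alpha$''; that holds only when $w(\alpha)$ is odd (Lemma~\ref{thm:odd}). When $w(\alpha)$ is even, the rightmost changed bit is $t_\alpha$, the \emph{leftmost} position such that $\FLIP{\alpha}{t_\alpha}\in\mathbf{S}$ (Lemma~\ref{thm:even}), and the successor is either $\FLIP{\alpha}{t_\alpha}$ or $\FLIPTWO{\alpha}{t_\alpha-1}{t_\alpha}$. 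This position depends on $\mathbf{S}$ globally and can lie far to the left of $\ell_\alpha$: for necklaces with $n=6$, the successor of $000011$ is $011011=\FLIPTWO{\alpha}{2}{3}$, even though the leftmost $1$ of $000011$ is at position $5$. Consequently there is no constant-size candidate set determined by the neighbourhood of the leftmost $1$, and no $O(1)$ bound on membership queries per string: locating $t_\alpha$ with a black-box membership tester requires a search over up to $\ell_\alpha$ positions. (A side issue: your logical dependency is reversed. The paper proves the explicit successor rule, Theorem~\ref{thm:big}, from Lemmas~\ref{thm:even} and~\ref{thm:odd}, and Theorem~\ref{thm:main2} is a corollary of it; the bare statement of Theorem~\ref{thm:main2} is too weak to tell you \emph{which} two bits change.)

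The missing idea, which is the actual content of the paper's running-time proof (Theorem~\ref{lem:n2}), is an amortization over the whole listing instead of a per-string query bound. Algorithm~\ref{alg:n2-f} finds $t_\alpha$ by a linear scan from $\ell_\alpha$ leftward, testing membership of $\FLIP{\alpha}{t_\alpha-1}$ at each step, so a single call can make $\Theta(n)$ membership tests. The saving observation is global: every string examined during these scans agrees with $\alpha$ from position $\ell_\alpha$ onward and has exactly one $1$ among its first $\ell_\alpha-1$ bits, and each such string is examined at most once over the entire generation --- two examinations would force the same value of $t_\alpha$ twice and hence a repeated output string, contradicting that $\mathcal{BRGC}(\mathbf{S})$ is a listing. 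Hence the total number of membership tests over the whole run is bounded by $|\mathbf{S}|$, the oracle cost is $O(m)$ amortized per string, and the algorithm runs in $O(n+m)$ amortized per string; your catalogue of $O(n)$-time testers (and the $O(n^{1.864})$ prefix-normal tester) then finishes the argument exactly as in the paper. Without this counting argument, or some substitute for it, the claimed bound does not follow: even exploiting the swap-first property to show that flippability is monotone in the position (so $t_\alpha$ could be found by binary search) would still cost $\Theta(\log n)$ queries per string, giving $O(n\log n)$ rather than $O(n)$ amortized time for the linear-time testers.
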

%==============================

%The remainder of this paper is outlined as follows.
In Section~\ref{sec:BRGC}, we formally define our version of the BRGC.
%version of the BRGC that we use.
In Section~\ref{sec:flip-swap}, we prove Theorem~\ref{thm:main1}, and define the flip-swap partially ordered set.
% and define the languages listed in its statement.
%In Section~\ref{sec:poset}, we introduce the flip-swap poset.
In Section~\ref{sec:main-result}, we give our generic successor rule and prove Theorem~\ref{thm:main2}.
In Section~\ref{sec:gen_n}, we present a generic generation algorithm that list out each string of a flip-swap language, and we prove Theorem \ref{thm:main3}.
%A C program that generates each of our sublist Gray codes is available on the Combinatorial Object Server~\cite{cos} and is discussed in Section~\ref{sec:gen_n}.

%================================================
%================================================
%================================================
%================================================
\section{The Binary Reflected Gray Code} \label{sec:BRGC}

%The \emph{binary reflected Gray code} (\emph{BRGC}) was named after Frank Gray and patented for a pulse code communication system in 1953~\cite{gray-pulse-code-communication-1953}.
%Let ${\bf B}(n)$ denote the set of binary strings of length $n$.
Let $\BINARY{n}$ denote the set of length $n$ binary strings.
Let $\BRGC(n)$ denote the listing of $\BINARY{n}$ in BRGC order.
Let $\BRGCREV(n)$ denote the listing $\BRGC(n)$ in reverse order.
Then $\BRGC(n)$ can be defined recursively as follows, where $\mathcal{L} \cdot x$ denotes the listing $\mathcal{L}$ with the character $x$ appended to the end of each string:
\begin{equation*}
\BRGC(n)  =
\begin{cases}
0,1   & \text{if $n=1$};   \\
 \BRGC(n-1) \cdot 0, \   \BRGCREV(n-1) \cdot 1  & \text{if $n >1$.}
\end{cases}
\end{equation*}
For example,  $\BRGC(2) = 00, 10, 11, 01$ and $\BRGCREV(2) = 01, 11, 10, 00$, {thus}
\begin{center}
$\BRGC(3) = 00{\bf 0}, 10{\bf 0}, 11{\bf 0}, 01{\bf 0}, 01{\bf 1}, 11{\bf 1}, 10{\bf 1}, 00{\bf 1}$.
\end{center}
This definition of BRGC order is the same as the one used by Vajnovzski~\cite{vaj}.
When the strings are read from right-to-left, we obtain the classic definition of BRGC order~\cite{gray-pulse-code-communication-1953}.  For flip-swap languages with respect to 0, we interchange the roles of the 0s and 1s; however, for our  discussions we will focus on flip-swap languages with respect to 1.
Table \ref{table:brgc} illustrates $\BRGC(4)$ and six flip-swap languages listed in Theorem~\ref{thm:main1}.

\begin{table}[t]
    \captionsetup{format=hang}
    \centering
    \begin{subfigure}[b]{0.5\textwidth}
        \centering
        \scriptsize
        \begin{tabular}{ | c | c | c | c | c | c | c |  } \hline
        %$\BINARY{4}$  & Necklaces & $0$-PNW  &  $\BINARY{4} \leq 1001$  & $\BINARY{4}$ with weight $\leq 2$ & $\BINARY{4} \leq$ its reversal \\ \hline
        $n=4$ & \ all \ & necklaces & $0$-PNW & $\leq 1001$ & $k \leq 2$ & neckties \\
        BRGC & i.  & xiii. & xii.  &  iii. & ii. & vii. \\
        \hline

          0000 & \checkmark &\checkmark    & \checkmark  &  \checkmark & \checkmark & \checkmark  \\
          1000 & \checkmark &    &   &  \checkmark &\checkmark &    \\
          1100 & \checkmark &    &   &   &\checkmark &    \\
          0100 & \checkmark &    &   & \checkmark  & \checkmark &    \\
          0110 & \checkmark &    & \checkmark  &  \checkmark & \checkmark & \checkmark \\
          1110 & \checkmark &    &   &   & &    \\
          1010 & \checkmark &   &   &   & \checkmark &    \\
          0010 & \checkmark &    & \checkmark  &  \checkmark  & \checkmark & \checkmark \\
          0011 & \checkmark & \checkmark   & \checkmark  &  \checkmark & \checkmark & \checkmark  \\
          1011 & \checkmark &    &   &   & & \checkmark   \\
          1111 & \checkmark & \checkmark   &   &   &  & \checkmark  \\
          0111 & \checkmark & \checkmark   & \checkmark  &  \checkmark &  & \checkmark  \\
          0101 & \checkmark & \checkmark  & \checkmark  &  \checkmark & \checkmark & \checkmark  \\
          1101 & \checkmark &   &   &   & &    \\
          1001 & \checkmark &   &   &  \checkmark & \checkmark & \checkmark  \\
          0001 & \checkmark & \checkmark   & \checkmark  &  \checkmark & \checkmark & \checkmark  \\
          \hline
         \end{tabular}
         \caption{String membership in 6 flip-swap languages.}
         \label{table:brgc_checkmarks}
    \end{subfigure}
    \begin{subfigure}[b]{0.494\textwidth}
        \centering
        \scriptsize
        \begin{tabular}{ c c c c c c  }
        %$\BINARY{4}$  & Necklaces & $0$-PNW  &  $\BINARY{4} \leq 1001$  & $\BINARY{4}$ with weight $\leq 2$ & $\BINARY{4} \leq$ its reversal \\ \hline
        i.  & xiii. & xii.  &  iii. & ii. & vii. \\[-0.7em]
        \ \scalebox{0.75}[1.0]{\raisebox{-0.2em}{\includegraphics[angle=270,scale=2.00,trim=1 1 1 1]{GrayCode4-crop.pdf}}}  \ &
        \ \scalebox{0.75}[1.0]{\raisebox{-0.2em}{\includegraphics[angle=270,scale=2.00,trim=1 1 1 1]{necklaces-crop.pdf}}} \ &
        \ \scalebox{0.75}[1.0]{\raisebox{-0.2em}{\includegraphics[angle=270,scale=2.00,trim=1 1 1 1]{0pnw-crop.pdf}}} \ &
        \ \scalebox{0.75}[1.0]{\raisebox{-0.3em}{\includegraphics[angle=270,scale=2.00,trim=1 1 1 1]{leq1001-crop.pdf}}} \ &
        \ \scalebox{0.75}[1.0]{\raisebox{-0.3em}{\includegraphics[angle=270,scale=2.00,trim=1 1 1 1]{weight12-crop.pdf}}} \ &
        \ \scalebox{0.75}[1.0]{\raisebox{-0.3em}{\includegraphics[angle=270,scale=2.00,trim=1 1 1 1]{neckties-crop.pdf}}}
        \end{tabular}
        \caption{Visualizating the 2-Gray codes in (a).}
        \label{table:brgc_checkmarks}
    \end{subfigure}
    \caption{Flip-swap languages ordered as sublists of the binary reflected Gray code.
    Theorem \ref{thm:main1} covers each language, so the resulting orders are 2-Gray codes.}
    \label{table:brgc}
\end{table}
%\caption{\small Six flip-swap languages with respect to $1$: ${\bf B}(4)$, necklaces, $0$-prefix normal words, binary strings that are lexicographically smaller than or equal to $1001$, binary strings with weight less than or equal to $2$, and binary strings that are lexicographically smaller or equal to its reversal of length $4$ in BRGC order. }
%\vspace{-30}

%====================================
%====================================
%====================================
%====================================
\section{Flip-swap languages} \label{sec:flip-swap}

In this section, we formalize some of the non-obvious flip-swap languages stated in Theorem~\ref{thm:main1}. Then we prove  Theorem~\ref{thm:main1} for a subset of the listed languages including necklaces, prefix normal words, and feasible solutions to the 0-1 knapsack problems.  The remainder of the languages are proved in the Appendix.

Consider a binary string $\alpha = b_1b_2\cdots b_n$.
The \emph{weight} of $\alpha$ is the  number of 1s it contains.
An \emph{inversion} in $\alpha$ with respect to $0^*1^*$ is an index pair $(i,j)$ such  that $i<j$ and $b_i = 1$ and $b_j = 0$.
The number of  \emph{transpositions}  of $\alpha$ with respect to another binary string $\beta$ of length $n$ is the minimum number of adjacent transpositions required to transform $\alpha$ to $\beta$.

A \emph{necklace} is the lexicographically smallest (largest) string in an equivalence class under rotation.
An \emph{aperiodic necklace} is a necklace that cannot be written in the form $\beta^j$ for some $j < n$.   A \emph{Lyndon word} is an aperiodic necklace  when using the lexicographically smallest string as the representative.
A \emph{prenecklace} is a prefix of a necklace.
A \emph{block} with respect to $0^*1^*$ is a maximal substring of the form $0^*1^*$.
A string $\alpha = b_1b_2 \cdots b_n = B_b B_{b-1} \cdots B_1$ is a \emph{pseudo-necklace} with respect to $0^*1^*$ if $B_b \leq B_i$ for all $1 \leq  i < b$.

%A binary string $\alpha$ is \emph{prefix normal} with respect to $0$ (also known as $0$-prefix normal word) if no substring of $\alpha$ has more $0$s than its prefix of the same length.

%
A \emph{$k$-ary Dyck word} is a binary string of length $n = tk$ with $t$ copies of $1$ and $t(k - 1)$ copies of $0$ such that every prefix has at most $k - 1$ copies of $0$ for every $1$.
The set of length $n$ prefixes of $k$-ary Dyck words is called \emph{left factors of $k$-ary Dyck words}.
%A \emph{left factor} of a $k$-ary Dyck words is...
%

%The input to a \emph{$0$-$1$ knapsack problem} is a knapsack capacity $W$, and a set of $n$ items each of which has a non-negative weight $w_i \geq 0$ and a value $v_i$.
%A subset of items is \emph{feasible} if the total weight of the items in the subset is less than or equal to the capacity $W$.
%Typically, the goal of the problem is to find a feasible subset with the maximum value, or to decide if a feasible subset exists with value $\geq c$.

Let $\FLIP{\alpha}{i}$ be the string obtained by complementing $b_i$.
Let $\SWAP{\alpha}{i}{j}$ be the string obtained by swapping $b_i$ and $b_j$.
When the context is clear we use $flip(i)$ and $swap(i, j)$ instead of $\FLIP{\alpha}{i}$ and $\SWAP{\alpha}{i}{j}$.
%Let $w(\alpha)$ denote the weight of $\alpha$.
Also, let $\ell_0(\alpha)$ denote the position of the leftmost $0$ of $\alpha$ or $n+1$ if no such position exists.
Similarly, let $\ell_1(\alpha)$ denote the position of the leftmost $1$ of $\alpha$ or $n+1$ if no such position exists.
We now prove that binary strings, necklaces, prefix normal words,  and feasible solutions to the 0-1 knapsack problems are flip-swap languages with respect to 1.  %

{\bf Binary strings}: Obviously the set ${\bf B}(n)$ satisfies the two closure properties of a flip-swap language and thus is a flip-swap language.
In fact, the BRGC order induces a cyclic $1$-Gray code for ${\bf B}(n)$~\cite{opac-b1101743,ruskey}.

{\bf Necklaces}:
%A \emph{necklace} is the lexicographically smallest string in equivalence class of strings under rotation.
Let ${\bf N}(n)$ be the set of necklaces of length $n$ and $\alpha = 0^j 1 b_{j+2} b_{j+3}\cdots b_n$ be a necklace in ${\bf N}(n)$. % with $\alpha \neq 0^{n-1}1$.
By the definition of necklace, it is easy to see that $\FLIP{\alpha}{\ell_\alpha} = 0^{j+1} b_{j+2} b_{j+3}\cdots b_n \in {\bf N}(n)$ and thus ${\bf N}(n)$ satisfies the flip-first property.
For the swap-first operation, observe that if $\alpha \neq 0^{n-1}1$ and $b_{j+2} = 1$, then the swap-first operation produces the same necklace.
Otherwise if $\alpha \neq 0^{n-1}1$ and $b_{j+2} = 0$, then the swap-first operation produces the string $0^{j+1} 1 b_{j+3} b_{j+4}\cdots b_n$ which is clearly a necklace.
Thus, the set of necklaces is a flip-swap language.

{\bf Prefix normal words}:
A binary string $\alpha$ is \emph{prefix normal} with respect to $0$ (also known as $0$-prefix normal word) if no substring of $\alpha$ has more $0$s than its prefix of the same length.
For example, the string 001010010111011 is a $0$-prefix normal word but the string 001010010011011 is not because it has a substring of length $5$ with four $0$s while the prefix of length $5$ has only three $0$s.
%There has been much interest recently on prefix normal words which have applications in binary jumbled pattern matching~\cite{DBLP:conf/fun/BurcsiFLRS14,DBLP:conf/cpm/BurcsiFLRS14,DBLP:journals/tcs/BurcsiFLRS17,DBLP:conf/dlt/FiciL11,pnw-lex}.

Observe that the set of $0$-prefix normal words of length $n$ satisfies the two closure properties of a flip-swap language as the flip-first and swap-first operations either increases or maintain the number of $0$s in its prefix. %, while the swap-first operation also decreases the number of $0$s of its suffix.
Thus, the set of $0$-prefix normal words of length $n$ is a flip-swap language.

{\bf Feasible solutions to $0$-$1$ knapsack problems}:
The input to a \emph{$0$-$1$ knapsack problem} is a knapsack capacity $W$, and a set of $n$ items each of which has a non-negative weight $w_i \geq 0$ and a value $v_i$.
A subset of items is \emph{feasible} if the total weight of the items in the subset is less than or equal to the capacity $W$.
Typically, the goal of the problem is to find a feasible subset with the maximum value, or to decide if a feasible subset exists with value $\geq c$.

Given the input to a $0$-$1$ knapsack problem, we reorder the items by non-decreasing weight.
That is, $w_i \geq w_{i+1}$ for $1 \leq i \leq n-1$.
Notice that the incidence vectors of feasible subsets are now a flip-swap language.
More specifically, flipping any $1$ to $0$ causes the subset sum to decrease, and so does swapping any $1$ with the bit to its right.
Hence, the language satisfies the flip-first and the swap-first closure properties and is a flip-swap language. %over ${\bf B}(n)$.

%====================================
%====================================
%====================================
%====================================
\subsection{Flip-Swap poset} \label{sec:poset}

In this section we introduce a poset whose ideals correspond to a flip-swap language which includes the string $0^n$.

Let $\alpha = b_1 b_2 \cdots b_n $ be a length $n$ binary string.   We define $\tau(\alpha)$ as follows:
    \begin{subnumcases}{\tau(\alpha) = }
    \alpha  &  if $\alpha = 0^n$,    \nonumber \\
    \FLIP{\alpha}{\ell_\alpha}       &  if  $\alpha \ne 0^n$ and ($\ell_\alpha = n$ or $b_{\ell_\alpha + 1} = 1$) \ \ \ \ \ \ \hfill (flip-first),  \nonumber \\
    \SWAP{\alpha}{\ell_\alpha}{\ell_\alpha+1}     &  otherwise\hfill (swap-first).   \nonumber
    \end{subnumcases}

Let $\tau^t(\alpha)$ denote the string that results from applying the $\tau$ operation $t$ times to $\alpha$.
%We then define a binary relation $<_R$ on ${\bf B}(n)$ such that $\beta <_R \alpha$ if $\beta = \tau^t(\alpha)$ for some $t > 0$.
We  define the binary relation $<_R$ on ${\bf B}(n)$ to be the transitive closure of the cover relation $\tau$, that is $\beta <_R \alpha$ if $\beta \ne \alpha$ and $\beta = \tau^t(\alpha)$ for some $t > 0$.
It is easy to see that the binary relation $<_R$ is irreflexive, anti-symmetric and transitive. Thus $<_R$ is a strict partial order.
The  relation $<_R$ on binary strings defines our flip-swap poset.
%We thus use the binary relation $<_R$ to define our flip-swap poset.

\begin{definition}
The \emph{flip-swap poset} $\mathcal{P}(n)$ is a strict poset with ${\bf B}(n)$ as the ground set and $<_R$ as the strict partial order. %of $\mathcal{P}(n)$.  %and $\alpha \prec \gamma$.
\end{definition}

Figure~\ref{fig:poset} shows the Hasse diagram of $\mathcal{P}(4)$  with the ideal for binary strings of length $4$ that are lexicographically smaller or equal to $1001$ in bold.
Observe that $\mathcal{P}(n)$ is always a tree with $0^n$ as the unique minimum element, and that its ideals are the subtrees that contain this minimum.

\begin{lemma}\label{lem:2BRGC-poset}
A set $ {\bf S} $ over ${\bf B}(n)$ that includes $0^n$ is a flip-swap language if and only if $ {\bf S} $ is an ideal of  $\mathcal{P}(n)$.
\end{lemma}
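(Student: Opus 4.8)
The plan is to prove the two implications by first reducing order-ideal membership to closure under the single map $\tau$, and then reconciling that single-operation closure with the two-operation flip-swap closure.

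First I would record a convenient reformulation of ``ideal.'' By the definition of $<_R$ as the transitive closure of the cover relation $\alpha \mapsto \tau(\alpha)$, the set of strings strictly below $\alpha$ is exactly $\{\tau^t(\alpha) : t > 0\}$, and repeatedly applying $\tau$ drives every string down to the unique minimum $0^n$. Hence a set ${\bf S}$ with $0^n \in {\bf S}$ is a (downward-closed) ideal of $\mathcal{P}(n)$ if and only if it is closed under $\tau$, meaning that $\alpha \in {\bf S}$ and $\alpha \ne 0^n$ imply $\tau(\alpha) \in {\bf S}$; the passage from closure under $\tau$ to full downward closure is a one-line induction on $t$.

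With this reformulation the ($\Rightarrow$) direction is immediate. If ${\bf S}$ is a flip-swap language containing $0^n$, then for any $\alpha \ne 0^n$ the string $\tau(\alpha)$ is, by the definition of $\tau$, either $\FLIP{\alpha}{\ell_\alpha}$ (the flip-first operation) or $\SWAP{\alpha}{\ell_\alpha}{\ell_\alpha+1}$ (the swap-first operation, which is applicable precisely in the case $\tau$ selects it). Both are among the operations under which ${\bf S}$ is closed, so $\tau(\alpha) \in {\bf S}$ and ${\bf S}$ is an ideal.

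The ($\Leftarrow$) direction is where the real work lies, and it is the step I expect to be the main obstacle. Assuming ${\bf S}$ is an ideal, I know only that it is closed under the single operation $\tau$ actually selects, yet I must show it is closed under both flip-first and swap-first. Closure under swap-first is routine: when the bit to the right of the leftmost $1$ is $0$ the swap coincides with $\tau(\alpha)$, and when that bit is $1$ the swap returns $\alpha$ itself, so the result lies in ${\bf S}$ either way. The crux is flip-first, since in the swap-first case $\tau$ postpones the flip. Here I would establish the key claim that $\FLIP{\alpha}{\ell_\alpha} <_R \alpha$ for every $\alpha \ne 0^n$: writing $\alpha = 0^{\ell-1}10^m\delta$ with $\ell = \ell_\alpha$ and $\delta$ either empty or beginning with $1$, the map $\tau$ performs $m$ consecutive swaps that march the leftmost $1$ rightward to position $\ell+m$, after which the bit to its right is a $1$ (or the $1$ sits in position $n$); the next application therefore flips, giving $\tau^{m+1}(\alpha) = 0^{\ell+m}\delta = \FLIP{\alpha}{\ell_\alpha}$. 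Thus the flip-first result lies strictly below $\alpha$ and so belongs to the downward-closed set ${\bf S}$. Verifying this run-of-swaps-then-flip bookkeeping---in particular that $\tau$ recovers the flip even though it is biased toward swapping---is the only nontrivial computation, and once it is in place both implications close and the equivalence follows.
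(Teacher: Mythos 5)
Your proof is correct, and it is in fact more complete than the paper's own. The paper spells out only the forward direction, exactly as you do: for $\alpha \in {\bf S}$ with $\alpha \ne 0^n$, the string $\tau(\alpha)$ is by definition either the flip-first or the swap-first string, so closure under both operations gives closure under $\tau$ and hence downward closure; the paper then dismisses the converse with ``the other direction is similar.'' You correctly identify that the converse is \emph{not} symmetric: an ideal is only guaranteed to be closed under the one operation that $\tau$ selects, and when the bit to the right of the leftmost $1$ is $0$, $\tau$ swaps rather than flips, so flip-first closure does not follow by the same one-line argument. Your key claim supplies the missing content: writing $\alpha = 0^{\ell-1}10^m\delta$ with $\ell = \ell_\alpha$ and $\delta$ empty or beginning with $1$, iterating $\tau$ performs $m$ swaps that march the leftmost $1$ to position $\ell+m$ and then one flip, so $\tau^{m+1}(\alpha) = 0^{\ell+m}\delta = \FLIP{\alpha}{\ell_\alpha}$, giving $\FLIP{\alpha}{\ell_\alpha} <_R \alpha$ and hence $\FLIP{\alpha}{\ell_\alpha} \in {\bf S}$ by downward closure; the swap-first half of the converse is routine (the swap is either $\tau(\alpha)$ itself or fixes $\alpha$), as you note. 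This swaps-then-flip computation is exactly what is hidden behind the paper's ``similar,'' so your write-up makes the lemma self-contained where the published proof is not.
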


\begin{proof}
Let $ {\bf S} $ be a flip-swap language over ${\bf B}(n)$ and $\alpha$ be a string in $ {\bf S}$.
Since $ {\bf S} $ is a flip-swap language, $ {\bf S} $ satisfies the flip-first and swap-first properties and thus $\tau(\alpha)$ is a string in $ {\bf S} $.
Therefore every string $\gamma <_R \alpha$ is in $ {\bf S} $ % because of the flip-first and swap-first properties.
and hence ${\bf S} $ is an ideal of  $\mathcal{P}(n)$.
The other direction is similar.
\end{proof}

If {\bf S} is a set of binary strings and $\gamma$ is a binary string, then the \emph{quotient} of {\bf S} and $\gamma$ is ${\bf S}/\gamma = \{\alpha \ | \ \alpha  \gamma \in {\bf S}\}$.
%We now prove two results for flip-swap languages.% and the flip-swap poset.

\begin{figure}[t]
    \captionsetup{format=hang}
    \centering
    \begin{subfigure}[b]{0.54\textwidth}
        \centering
        \includegraphics[height=1.1in]{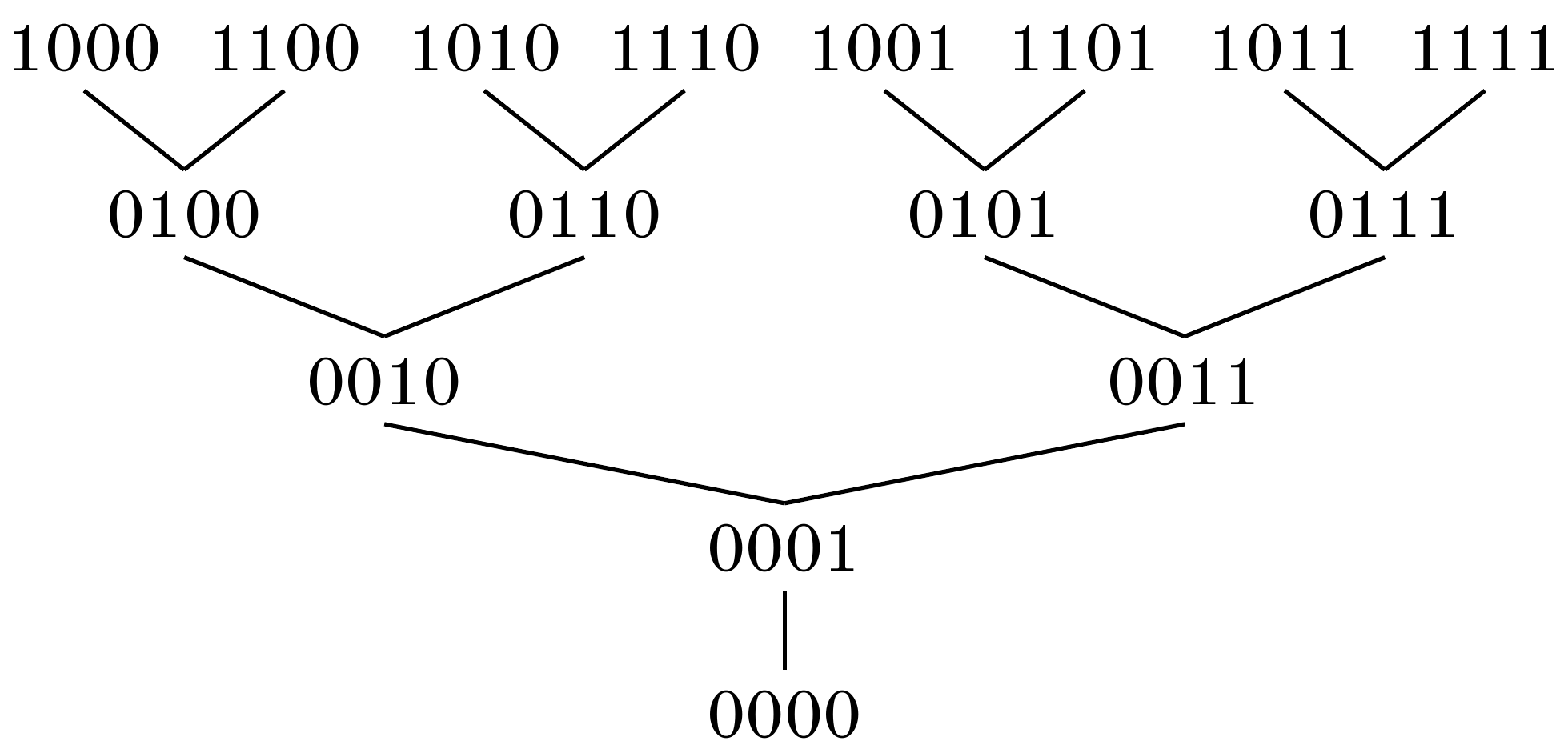}
        \caption{The flip-swap poset $\mathcal{P}(4)$.}
        \label{fig:poset_P4}
    \end{subfigure}
    %\vspace{-15}
    \begin{subfigure}[b]{0.45\textwidth}
        \centering
        \includegraphics[height=1.1in]{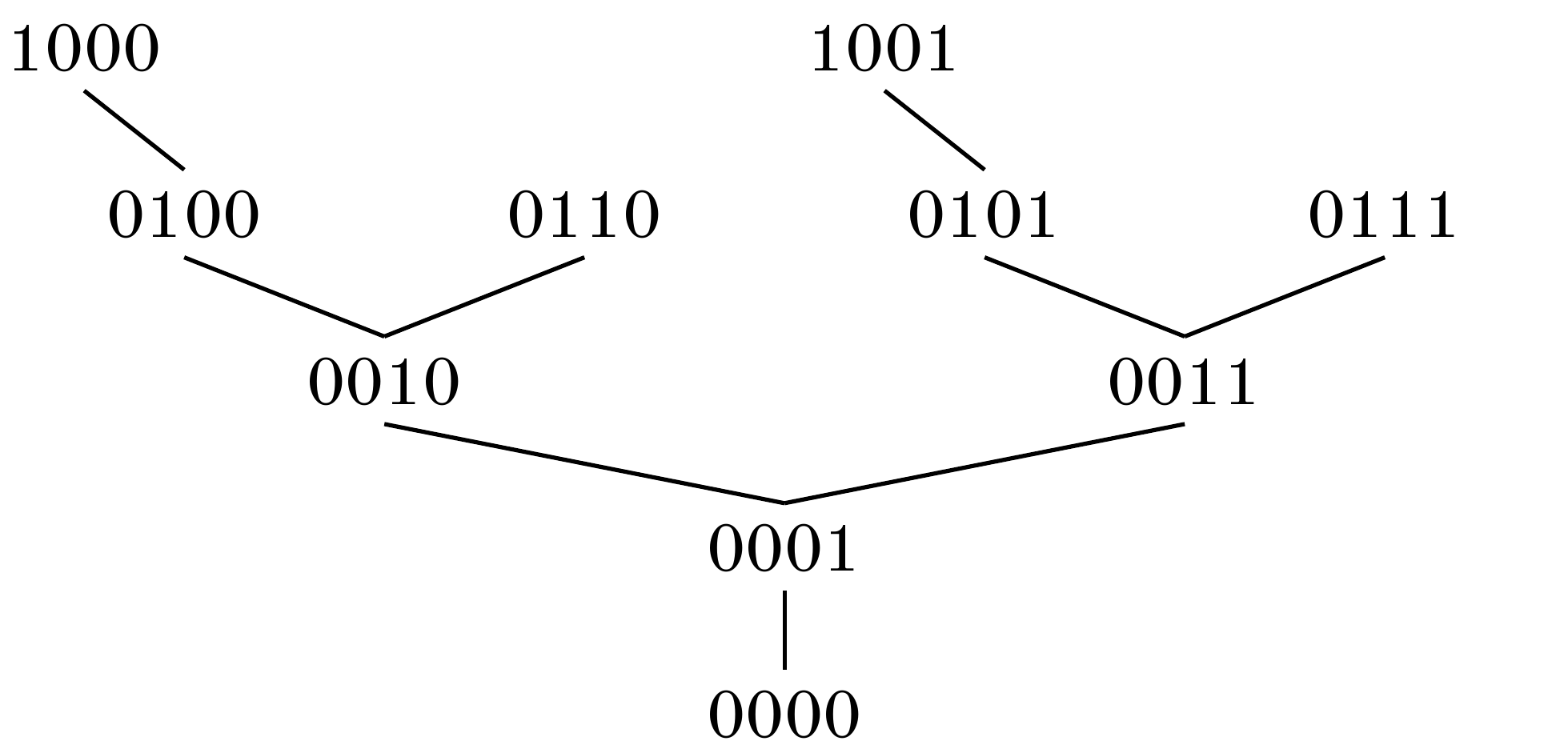}
        \caption{An ideal of $\mathcal{P}(4)$.}
        \label{fig:poset_leq}
    \end{subfigure}
    \caption{Flip-swap languages are the ideals of the flip-swap poset.
    The ideal in (b) contains the $4$-bit binary strings that are $\leq 1001$ with respect to  lexicographic order.}
    \label{fig:poset}
    %\vspace{-10}
\end{figure}

\begin{lemma}\label{lem:closure}
If ${\bf S}_1$ and ${\bf S}_2$ are flip-swap languages and $\gamma$ is a binary string, then ${\bf S}_1 \cap {\bf S}_2$, ${\bf S}_1 \cup {\bf S}_2$ and ${\bf S}_1 / \gamma$ are flip-swap languages.
\end{lemma}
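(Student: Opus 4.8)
The plan is to verify the two defining closure properties (flip-first and swap-first) directly from the way these operations act: each touches only the leftmost $1$ and its right neighbour, so it interacts cleanly with the set operations $\cap$, $\cup$, and with appending a fixed suffix. I would handle $\mathbf{S}_1\cap\mathbf{S}_2$ and $\mathbf{S}_1\cup\mathbf{S}_2$ through the ideal characterisation of Lemma~\ref{lem:2BRGC-poset}, and $\mathbf{S}_1/\gamma$ by a direct argument.

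\textbf{Intersection and union.} First note that $\mathbf{S}$ is a flip-swap language iff $\mathbf{S}\cup\{0^n\}$ is one, because the definition constrains only $\mathbf{S}\cup\{0^n\}$; hence $\mathbf{S}_1\cup\{0^n\}$ and $\mathbf{S}_2\cup\{0^n\}$ are ideals of $\mathcal{P}(n)$ by Lemma~\ref{lem:2BRGC-poset}. Using the identities $(\mathbf{S}_1\cap\mathbf{S}_2)\cup\{0^n\}=(\mathbf{S}_1\cup\{0^n\})\cap(\mathbf{S}_2\cup\{0^n\})$ and $(\mathbf{S}_1\cup\mathbf{S}_2)\cup\{0^n\}=(\mathbf{S}_1\cup\{0^n\})\cup(\mathbf{S}_2\cup\{0^n\})$, the claim reduces to the fact that an intersection or union of down-closed sets is again down-closed. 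Each resulting set is an ideal of $\mathcal{P}(n)$ containing $0^n$, so Lemma~\ref{lem:2BRGC-poset} converts it back into a flip-swap language.

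\textbf{Quotient.} This direct step is the one I expect to demand the most care. Suppose $\mathbf{S}\subseteq\mathbf{B}(n)$ and $\gamma$ has length $m$, so $\mathbf{S}/\gamma\subseteq\mathbf{B}(n-m)$; I must show $(\mathbf{S}/\gamma)\cup\{0^{n-m}\}$ is closed. Fix $\alpha\in\mathbf{S}/\gamma$ with $\alpha\ne 0^{n-m}$, so that $\alpha\gamma\in\mathbf{S}$. The crucial point is that, because $\alpha\ne 0^{n-m}$, its leftmost $1$ lies at a position $\ell=\ell_1(\alpha)\le n-m$ that is also the leftmost $1$ of $\alpha\gamma$; hence flipping that $1$, and (when $\ell<n-m$) swapping it rightward, stay inside the length-$(n-m)$ prefix and satisfy $\FLIP{\alpha\gamma}{\ell}=\FLIP{\alpha}{\ell}\,\gamma$ and $\SWAP{\alpha\gamma}{\ell}{\ell+1}=\SWAP{\alpha}{\ell}{\ell+1}\,\gamma$. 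Applying the closure of $\mathbf{S}\cup\{0^n\}$ to $\alpha\gamma$ keeps each transformed string, with $\gamma$ still appended, in $\mathbf{S}\cup\{0^n\}$; stripping $\gamma$ then lands it back in $(\mathbf{S}/\gamma)\cup\{0^{n-m}\}$.

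\textbf{Boundary cases.} The care in the quotient is concentrated in two places. At the boundary $\ell=n-m$, swap-first is simply inapplicable to $\alpha$ (its leftmost $1$ is already the last bit), so that obligation is vacuous even though $\alpha\gamma$ could be swapped into $\gamma$. And there are two ways $\mathbf{S}\cup\{0^n\}$ can absorb a transformed string: for flip-first, $\FLIP{\alpha}{\ell}\gamma\in\mathbf{S}$ yields $\FLIP{\alpha}{\ell}\in\mathbf{S}/\gamma$, while $\FLIP{\alpha}{\ell}\gamma=0^n$ forces $\FLIP{\alpha}{\ell}=0^{n-m}$; for swap-first, swapping preserves the number of $1$s, so $\SWAP{\alpha}{\ell}{\ell+1}$ is never $0^{n-m}$ and only the first alternative arises. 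Confirming these cases establishes closure and finishes the proof.
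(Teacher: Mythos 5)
Your proof is correct. For intersection and union you follow the same route as the paper---both arguments reduce to the fact that intersections and unions of ideals of $\mathcal{P}(n)$ are again ideals, via Lemma~\ref{lem:2BRGC-poset}---though you are more scrupulous about first adjoining $0^n$ so that the lemma's hypothesis is met, a bookkeeping point the paper glosses over and only patches later in Corollary~\ref{lem:union-intersection-quotient}. For the quotient your route is genuinely different and more elementary. The paper proves that ${\bf S}_1/\gamma$ is closed under the single operation $\tau$, splitting into three cases according to whether $\ell_{\alpha\gamma}$ is less than, equal to, or greater than $j=|\alpha|$ (using $\tau(\alpha\gamma)=\tau(\alpha)\gamma$ in the first case and flip-first closure of ${\bf S}_1$ in the second), and then converts $\tau$-closure back into the flip-swap property through the ideal characterisation of Lemma~\ref{lem:2BRGC-poset}. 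You instead verify the flip-first and swap-first properties of $({\bf S}_1/\gamma)\cup\{0^{n-m}\}$ straight from the definition: when $\alpha\neq 0^{n-m}$ both operations act inside the length-$(n-m)$ prefix of $\alpha\gamma$ and hence commute with appending $\gamma$, and your two boundary observations---swap-first is simply inapplicable to $\alpha$ when $\ell=n-m$, and a swap preserves weight so its result can never be absorbed as $0^n$---are exactly the places where care is needed, and you handle them correctly. The paper's detour buys uniformity (only one map $\tau$ to track, with the poset picture doing the rest), while your argument is self-contained for the quotient and invokes Lemma~\ref{lem:2BRGC-poset} only in the intersection/union half; both are sound.
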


\begin{proof}
Let ${\bf S}_1$ and ${\bf S}_2$ be two flip-swap languages and let $\gamma$ be a binary string.
The intersection and union of ideals of any poset are also ideals of that poset, so ${\bf S}_1 \cap {\bf S}_2$ and ${\bf S}_1 \cup {\bf S}_2$ are flip-swap languages.
Now consider $\alpha \in {\bf S}_1 / \gamma$.

Suppose  $\alpha \in {\bf S}_1 / \gamma$  for some non-empty $\gamma$ where $j = |\alpha|$.  This means that $\alpha \gamma \in {\bf S}_1$.
Consider three cases depending  $\ell_{\alpha\gamma}$.
If $\ell_{\alpha\gamma} < j$, then clearly $\tau(\alpha \gamma)  = \tau(\alpha) \gamma$. From  Lemma~\ref{lem:2BRGC-poset}, $\tau(\alpha) \gamma \in  {\bf S}_1$ and thus $\tau(\alpha) \in {\bf S}_1 / \gamma$.
If $\ell_{\alpha\gamma} = j$, then $\alpha = 0^{j-1}1$ and $\tau(\alpha) = 0^j$. Since ${\bf S}_1$ is a flip-swap language
$0^j\gamma \in {\bf S}_1$.  Again this implies that $\tau(\alpha) \in {\bf S}_1 / \gamma$.    If $\ell_{\alpha\gamma} > j$ then
$\alpha = 0^j$ and $\tau(\alpha) = \alpha$ in this case.   For each case we have shown that  $\tau(\alpha) \in {\bf S}_1 / \gamma$ and thus ${\bf S}_1 / \gamma$ is a flip-swap language by Lemma~\ref{lem:2BRGC-poset}.
\end{proof}

\begin{corollary}\label{lem:union-intersection-quotient}
%The union, intersection and quotient of  flip-swap languages are also flip-swap languages.
Flip-swap languages are closed under union, intersection, and quotient.
\end{corollary}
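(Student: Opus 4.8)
The plan is to derive Corollary~\ref{lem:union-intersection-quotient} directly from Lemma~\ref{lem:closure}, since the corollary is essentially a restatement of that lemma at the level of operations rather than of specific instances. First I would unpack the phrase ``closed under union'': it means precisely that whenever ${\bf S}_1$ and ${\bf S}_2$ are flip-swap languages, then so is ${\bf S}_1 \cup {\bf S}_2$; and analogously for intersection and for quotient. Each of these three membership conclusions is exactly one of the three assertions already proved in Lemma~\ref{lem:closure}, stated there for arbitrary flip-swap languages ${\bf S}_1, {\bf S}_2$ and an arbitrary binary string $\gamma$. Hence the three closure claims of the corollary correspond one-to-one with the three conclusions of the lemma.

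Consequently no new argument is required: I would simply invoke Lemma~\ref{lem:closure} once for each of the three operations and conclude. The only point worth flagging is that the quotient ${\bf S}_1/\gamma$ generally ranges over binary strings of a shorter length than those of ${\bf S}_1$, so ``closure'' under quotient must be read across flip-swap languages of all lengths rather than within a single fixed length $n$. This matches the definition, in which a flip-swap language may have any length, so it poses no difficulty. Since the genuinely nontrivial work -- in particular the case analysis on the position $\ell_{\alpha\gamma}$ of the leftmost $1$ that is needed to verify the flip-first and swap-first closure properties for the quotient -- has already been carried out inside the proof of Lemma~\ref{lem:closure}, the corollary follows immediately, and there is no residual obstacle.
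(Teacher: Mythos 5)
Your overall strategy coincides with the paper's: both proofs consist of invoking Lemma~\ref{lem:closure} once for each of the three operations. However, your reading of the corollary as a pure restatement requiring ``no new argument'' glosses over the one point the corollary is actually there to settle. A flip-swap language ${\bf S}$ is only required to have ${\bf S} \cup \{0^n\}$ closed under the two operations; ${\bf S}$ itself need not contain $0^n$. Lemma~\ref{lem:closure}, as proved, operates entirely through the ideal characterization of Lemma~\ref{lem:2BRGC-poset}, which applies only to sets that do contain $0^n$: the union/intersection part of its proof argues about ideals of $\mathcal{P}(n)$, and the quotient part concludes via Lemma~\ref{lem:2BRGC-poset}. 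So the lemma, read in the context of its proof, covers flip-swap languages containing $0^n$, and the corollary's content is precisely the extension to languages that exclude it.

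The paper bridges this with the clause ``can be represented by ideals of the flip-swap poset, possibly excluding $0^n$'': one passes to the ideals ${\bf S}_A \cup \{0^n\}$ and ${\bf S}_B \cup \{0^n\}$, applies the lemma there, and observes that the resulting ideal differs from the desired set by at most the string $0^n$ (respectively $0^{j}$ with $j = n - |\gamma|$ for the quotient), which the definition of a flip-swap language tolerates. For instance, $({\bf S}_A \cup \{0^n\}) \cup ({\bf S}_B \cup \{0^n\}) = ({\bf S}_A \cup {\bf S}_B) \cup \{0^n\}$ is an ideal, and that is exactly the statement that ${\bf S}_A \cup {\bf S}_B$ is a flip-swap language; similarly for intersection. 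For the quotient one checks that $({\bf S}_A \cup \{0^n\})/\gamma$ equals $({\bf S}_A/\gamma) \cup \{0^{j}\}$ when $\gamma = 0^{|\gamma|}$ and equals ${\bf S}_A/\gamma$ otherwise, and in either case the flip-swap property of ${\bf S}_A/\gamma$ follows. Your remark about the quotient living at a shorter length is correct but is not the missing piece; the missing piece is this $0^n$ bookkeeping.
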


\begin{proof}
Let ${\bf S}_A$ and ${\bf S}_B$ be flip-swap languages and $\gamma$ be a binary string.
Since ${\bf S}_A$ and ${\bf S}_B$ can be represented by ideals of the flip-swap poset, possibly excluding $0^n$, by Lemma~\ref{lem:closure} the sets ${\bf S}_A \cap {\bf S}_B$, ${\bf S}_A \cup {\bf S}_B$ and ${\bf S}_A/\gamma$ are flip-swap languages.
%The proof for the union, intersection and quotient of near flip-swap languages is similar.
\end{proof}

\begin{lemma}\label{lem:prefix}
%If ${\bf S}$ is a flip-swap language, $\gamma$ is a binary string and the set ${\bf S}/\gamma$ is not empty, then $0^t \gamma \in {\bf S}$ for some $t\geq 0$.
If  $\alpha \gamma$ is a binary string in a flip-swap language ${\bf S}$, then $0^{|\alpha|} \gamma \in {\bf S}$.
\end{lemma}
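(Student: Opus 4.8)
The plan is to reach $0^{|\alpha|}\gamma$ from $\alpha\gamma$ by repeatedly applying only the \emph{flip-first} closure, never the swap-first one. This is the crucial observation: the definition of a flip-swap language asks ${\bf S}\cup\{0^n\}$ to be closed under flipping the leftmost $1$ and under the swap as two \emph{separate} operations, so I am free to apply flipping alone even when the combined operator $\tau$ would have chosen to swap. Swapping could push a $1$ across the boundary into $\gamma$ and corrupt the suffix I want to preserve, whereas flipping the leftmost $1$ only ever clears a bit in place.

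First I would set $j=|\alpha|$ and let $w$ be the weight of $\alpha$, and induct on $w$ (equivalently, iterate). If $w=0$ then $\alpha=0^{j}$ and there is nothing to prove. If $w\ge 1$ then the leftmost $1$ of $\alpha\gamma$ sits at some position $\ell_1(\alpha\gamma)=\ell\le j$, because $\alpha$ itself contains a $1$. Flipping this bit is an application of the flip-first closure, so $\FLIP{\alpha\gamma}{\ell}\in{\bf S}\cup\{0^n\}$; moreover this string equals $\alpha'\gamma$ where $\alpha'$ has weight $w-1$ and the suffix $\gamma$ is untouched, since $\ell\le j$. Repeating $w$ times lowers the prefix weight to $0$ one step at a time while leaving every position $>j$ fixed, so the process terminates exactly at $0^{j}\gamma$, and every intermediate string --- being obtained by a flip-first step from a member --- lies in ${\bf S}\cup\{0^n\}$. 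Hence $0^{|\alpha|}\gamma\in{\bf S}\cup\{0^n\}$.

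The one point that needs care is the difference between ${\bf S}$ and ${\bf S}\cup\{0^n\}$: the clearing argument only places $0^{|\alpha|}\gamma$ in ${\bf S}\cup\{0^n\}$, and these differ only in the single string $0^n$. When $\gamma$ contains a $1$ we have $0^{|\alpha|}\gamma\ne 0^n$ and the conclusion $0^{|\alpha|}\gamma\in{\bf S}$ is immediate. The remaining case $0^{|\alpha|}\gamma=0^n$ genuinely requires $0^n\in{\bf S}$ --- indeed the statement is false for a language such as ${\bf S}=\{1\}$ that omits $0^n$ --- so I would invoke the standing assumption of this subsection that the flip-swap languages under consideration contain $0^n$ (equivalently, by Lemma~\ref{lem:2BRGC-poset}, are ideals of $\mathcal{P}(n)$), whence ${\bf S}\cup\{0^n\}={\bf S}$ and the proof closes. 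I expect this boundary bookkeeping, rather than the clearing argument itself, to be the only delicate part; a tempting alternative --- to exhibit $0^{|\alpha|}\gamma=\tau^{t}(\alpha\gamma)$ and appeal to downward closure of the ideal --- is avoidable precisely because working with the standalone flip-first operation keeps $\gamma$ frozen and sidesteps any analysis of the $\tau$-trajectory.
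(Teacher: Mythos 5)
Your proof is correct and is essentially the paper's own argument: the paper's entire proof is the single sentence ``This result follows from the flip-first property of flip-swap languages,'' and your iteration of flip-first steps---clearing the 1s of $\alpha$ one at a time while the suffix $\gamma$ stays frozen because the leftmost 1 always lies in the first $|\alpha|$ positions---is exactly the elaboration of that sentence. Your boundary point is also well taken and is something the paper's one-liner silently glosses over: the closure property only places $0^{|\alpha|}\gamma$ in ${\bf S} \cup \{0^n\}$, so when $\gamma$ is all zeros the stated lemma genuinely needs $0^n \in {\bf S}$ (e.g.\ it fails for the flip-swap language $\{10,01\}$ with $\alpha = 1$, $\gamma = 0$), which is harmless in practice since the lemma is only invoked under the standing assumption that ${\bf S}$ contains $0^n$, as in Theorem~\ref{thm:big}.
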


\begin{proof}
This result follows from the flip-first property of flip-swap languages.
%Suppose there is a string $\alpha \neq 0^n$ in ${\bf S}$ with suffix $\gamma$, then by repeatedly applying $\tau$ on $\alpha$ we obtain the string $0^t \gamma$ for some $t \geq 0$.
\end{proof}

%====================================
%====================================
%====================================
%====================================
\section{A generic successor rule for flip-swap languages}\label{sec:main-result}

Consider any flip-swap language ${\bf S}$ that includes the string $0^n$.
Let $\mathcal{BRGC}({\bf S})$ denote the listing of $\bf S$ in BRGC order.  Given a string $\alpha \in \mathbf{S}$,
we define a generic \emph{successor rule} that computes the string following $\alpha$ in the cyclic listing $\mathcal{BRGC}({\bf S})$.
%The discussion of the rest of the section will be based on flip-swap languages with respect to $1$. Similar results follow for flip-swap languages with respect to $0$.

Let  $\alpha = b_1 b_2 \cdots b_n$ be a string in ${\bf S}$.
Let $t_\alpha$ be the leftmost position such that $\FLIP{\alpha}{t_\alpha} \in {\bf S}$ when $|{\bf S}|>1$, such a $t_\alpha$ exists since ${\bf S}$ satisfies the flip-first property and $|{\bf S}|>1$.
Recall that $\ell_\alpha$ is defined to be the position of the leftmost $1$ of $\alpha$ (or $|\alpha|+1$ if no such position exists).
Notice that $t_\alpha \leq \ell_\alpha$ when $|{\bf S}|>1$ since ${\bf S}$ is a flip-swap language.

\begin{table}[t]
\begin{center} \footnotesize
\begin{tabular}{c @{\hskip 0.2in}  c @{\hskip 0.2in} c @{\hskip 0.2in}  c @{\hskip 0.2in} c @{\hskip 0.2in} c}
%\multicolumn{4}{c}{{\bf Necklaces of length 6 in BRGC order}} \\
Necklaces  &Parity of $w(\alpha)$	& $t_\alpha$  & $\ell_\alpha$  &  Successor& Case\\ \hline
000000 & even &$6$ &  & $flip2(5, 6)$  &(\ref{f3})   \\
000011 & even& $3$ &   & $flip2(2, 3)$ &(\ref{f3}) \\
011011 & even&$2$&   & $flip(2)$  &(\ref{f2})\\
001011 & odd && $3$  & $flip(4)$  &(\ref{f5}) \\
001111 & even&$2$&   &$flip2(1, 2)$&(\ref{f3}) \\
111111 & even&$1$&   &$flip(1)$ &(\ref{f2})\\
011111& odd && $2$  &$flip(3)$ &(\ref{f5}) \\
010111 & even&$3$&  &$flip(2)$ & (\ref{f2})\\
000111& odd && $4$  &$flip(5)$ &(\ref{f5}) \\
000101 & even &$2$&   &$flip(2)$ &(\ref{f2})\\
010101& odd && $2$  & $flip2(2, 3)$&(\ref{f4}) \\
001101& odd && $3$   & $flip(4)$&(\ref{f5}) \\
001001& even&$3$&    &$flip(3)$ &(\ref{f2})\\
000001& odd&&  &$flip(6)$  &(\ref{f1})
\end{tabular}
\end{center}
%\vspace{-10}
\captionsetup{format=hang}
\caption{The necklaces of length 6 induced by successive applications the function $f$ starting from $000000$. %Observe that the resulting listing is the same as the BRGC order listing for necklaces of length 6.
The sixth column of the table lists out the corresponding rules in $f$ that apply to each necklace to obtain the next necklace. %The values $w(\alpha)$, $t$ and $\ell_\alpha$ correspond to the weight of $\alpha$, the leftmost possible position such that $flip_\alpha(t) \in {\bf S}$ and the position of the leftmost $1$ of $\alpha$ respectively.
}
\vspace{-2em}
\end{table}\label{table:brgc11}

Let $\FLIPTWO{\alpha}{i}{j}$ be the string obtained by complementing both $b_i$ and $b_j$.
When the context is clear we use $flip2(i, j)$ instead of $\FLIPTWO{\alpha}{i}{j}$.
Also, let $w(\alpha)$ denote the number of $1$s of $\alpha$.
We claim that the following function $f$ computes the next string in the cyclic ordering $\mathcal{BRGC}({\bf S})$:
{\footnotesize
%\begin{equation}
\begin{subnumcases}{\hspace{-2em}f(\alpha) =}
%flip(\alpha, 1)  & \mbox{if $w(\alpha)$ is even and $flip(\alpha, 1) \in {\bf S}$;} \\
0^n   & \mbox{if $\alpha = 0^{n-1}1$;}\label{f1}  \\ %and $0^n \in {\bf S}$;} \\
%0^{n-2}11   & \mbox{if $\alpha = 0^{n-1}1$ and $0^n \notin {\bf S}$;} \\
\FLIP{\alpha}{t_\alpha}   & \mbox{if $w(\alpha)$ is even and ($t_\alpha = 1$ or $\FLIPTWO{\alpha}{t_\alpha-1}{t_\alpha} \notin {\bf S}$);}\label{f2} \\
\FLIPTWO{\alpha}{t_\alpha-1}{t_\alpha}    & \mbox{if $w(\alpha)$ is even and $\FLIPTWO{\alpha}{t_\alpha-1}{t_\alpha} \in {\bf S}$;}\label{f3} \\
\FLIPTWO{\alpha}{\ell_\alpha}{\ell_\alpha +1}   &  \mbox{if $w(\alpha)$ is odd and $\FLIP{\alpha}{\ell_\alpha +1} \notin {\bf S}$;}\label{f4} \\
\FLIP{\alpha}{\ell_\alpha +1} & \mbox{if $w(\alpha)$ is odd and $\FLIP{\alpha}{\ell_\alpha +1} \in {\bf S}$.}\label{f5}  \label{eq:recursive}
\end{subnumcases}}
%\end{equation}

Thus, successive applications of the function $f$ on a flip-swap language ${\bf S}$, starting with the string $0^n$, list out each string in ${\bf S}$ in BRGC order.
As an illustration of the function $f$, successive applications of this rule for the set of necklaces of length $6$ starting with the necklace $000000$ produce the listing in Table 2.

\begin{restatable}{theorem}{mainclaim}
\label{thm:big}
If ${\bf S}$ is a flip-swap language including the string $0^n$ and $|{\bf S}| > 1$, then $f(\alpha)$ is the string immediately following the string $\alpha$ in ${\bf S}$ in the cyclic ordering $\mathcal{BRGC}({\bf S})$.
\end{restatable}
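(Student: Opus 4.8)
The plan is to reduce the statement to one clean claim: for $\alpha \in \mathbf{S}$, the string $f(\alpha)$ is the \emph{first} string of $\mathbf{S}$ that occurs strictly after $\alpha$ in the full cyclic listing $\BRGC(n)$. Since $\mathcal{BRGC}(\mathbf{S})$ is just $\BRGC(n)$ with the non-members deleted, this claim is equivalent to the theorem. The engine is the successor rule for the full BRGC, which I would record first as a lemma proved by a short induction on $\BRGC(n)=\BRGC(n-1)\cdot 0,\ \BRGCREV(n-1)\cdot 1$: every step flips one bit and the list begins at $0^n$, so the parity of $w(\alpha)$ equals the parity of the rank of $\alpha$; hence the string after $\alpha\neq 0^{n-1}1$ is $\FLIP{\alpha}{1}$ when $w(\alpha)$ is even and $\FLIP{\alpha}{\ell_\alpha+1}$ when $w(\alpha)$ is odd, while $0^{n-1}1$ wraps to $0^n$.

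The wrap case \eqref{f1} I would dispatch first using the flip-swap poset. In $\mathcal{P}(n)$ the only $\gamma\neq 0^n$ with $\tau(\gamma)=0^n$ is $0^{n-1}1$ (a swap cannot change the weight, and the unique flip producing $0^n$ removes a lone trailing $1$); thus $0^{n-1}1$ is the unique child of the root and every other vertex is its descendant. By Lemma~\ref{lem:2BRGC-poset} any ideal with $|\mathbf{S}|>1$ therefore contains $0^{n-1}1$, which has maximum rank in $\BRGC(n)$, so it is the last element of $\mathcal{BRGC}(\mathbf{S})$ and its cyclic successor is indeed $0^n$.

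For the remaining cases I would verify two things. First, $f(\alpha)\in\mathbf{S}$: for \eqref{f2} and \eqref{f5} this is the defining condition, for \eqref{f3} it is the case hypothesis, and for \eqref{f4} it follows from closure, since $\FLIPTWO{\alpha}{\ell_\alpha}{\ell_\alpha+1}=\SWAP{\alpha}{\ell_\alpha}{\ell_\alpha+1}$ when $b_{\ell_\alpha+1}=0$ (swap-first), and equals a double application of flip-first when $b_{\ell_\alpha+1}=1$. Second, and this is the crux, every string strictly between $\alpha$ and $f(\alpha)$ in $\BRGC(n)$ must lie outside $\mathbf{S}$. The key local fact is that the bits in positions $1,\dots,\ell_\alpha$ (even case) or $1,\dots,\ell_\alpha+1$ (odd case) run through a reflected copy of a smaller BRGC with the remaining suffix frozen, which pins down the BRGC order of all candidates: in the even case one meets $\FLIP{\alpha}{1},\FLIP{\alpha}{2},\dots,\FLIP{\alpha}{\ell_\alpha}$ in increasing order, with $\FLIPTWO{\alpha}{t-1}{t}$ sitting immediately after $\FLIP{\alpha}{t-1}$; in the odd case $\FLIP{\alpha}{\ell_\alpha+1}$ is the immediate successor and $\FLIPTWO{\alpha}{\ell_\alpha}{\ell_\alpha+1}$ appears a few steps later. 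The skipped strings are then excluded two ways: the single flips $\FLIP{\alpha}{s}$ with $s<t_\alpha$ are out by minimality of $t_\alpha$, the discarded $\FLIP{\alpha}{\ell_\alpha+1}$ is out by the hypothesis of \eqref{f4}, and every remaining intermediate string $\gamma$ satisfies that $\tau(\gamma)$ equals one of these already-excluded candidates, so $\gamma\in\mathbf{S}$ would violate downward-closure of the ideal (Lemma~\ref{lem:2BRGC-poset}). The net change is exactly the one or two bits recorded by $f$.

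The main obstacle is making the ``local reflected-BRGC'' structure and the $\tau$-reduction of intermediate strings precise and uniform across all $\alpha$. The natural engine is induction on $n$ via the decomposition $\mathcal{BRGC}(\mathbf{S})=\mathcal{BRGC}(\mathbf{S}/0)\cdot 0,\ \overline{\mathcal{BRGC}(\mathbf{S}/1)}\cdot 1$, where $\mathbf{S}/0$ and $\mathbf{S}/1$ are again flip-swap languages by Corollary~\ref{lem:union-intersection-quotient}, and Lemma~\ref{lem:prefix} controls how membership behaves under appending bits. The successor within each block, the block transition, and the cyclic wrap then reduce to the inductive hypothesis after translating $w(\alpha)$, $\ell_\alpha$, and $t_\alpha$ for $\alpha=\alpha'c$ into the corresponding quantities for $\alpha'$. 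I expect the delicate part to be the second block: its strings end in $1$ and are listed in \emph{reverse}, so ``successor'' becomes ``predecessor'' on $\mathbf{S}/1$, and appending a $1$ flips the weight parity; lining up the membership-dependent branches \eqref{f3} versus \eqref{f2} and \eqref{f4} versus \eqref{f5} across this reflection is where the careful bookkeeping lives.
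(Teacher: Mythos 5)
Your plan takes a genuinely different route from the paper (the paper never enumerates the strings skipped between $\alpha$ and $f(\alpha)$; it uses Vajnovszki's Lemma~\ref{lem:gray} to pin down, in Lemmas~\ref{thm:even} and~\ref{thm:odd}, the \emph{rightmost position} where $\alpha$ and its successor differ, via a three-string contradiction $\alpha \prec \gamma \prec \beta$, after which only one or two candidate successors remain). Much of your scaffolding is sound: the reduction to ``$f(\alpha)$ is the first member of $\mathbf{S}$ after $\alpha$ in the full $\BRGC(n)$,'' the full-BRGC successor rule, the wrap case, and the membership $f(\alpha)\in\mathbf{S}$ are all correct. But the crux --- excluding every string strictly between $\alpha$ and $f(\alpha)$ --- rests on a claim that is false as stated. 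You assert that every intermediate string $\gamma$ has $\tau(\gamma)$ equal to an already-excluded candidate. Counterexample: take $n=5$, $\mathbf{S}=\{\beta : \beta \le 00011 \text{ lexicographically}\}$ (language iii of Theorem~\ref{thm:main1}) and $\alpha=00011$. Then $w(\alpha)$ is even, $t_\alpha=4$, $\FLIPTWO{\alpha}{3}{4}=00101\notin\mathbf{S}$, so rule~\eqref{f2} gives $f(\alpha)=\FLIP{\alpha}{4}=00001$. The string $\gamma=11111$ lies strictly between $\alpha$ and $f(\alpha)$ in $\BRGC(5)$ (check both comparisons with Lemma~\ref{lem:gray}), yet $\tau(11111)=01111=\FLIPTWO{\alpha}{2}{3}$, which is not among your excluded candidates $\{\FLIP{\alpha}{s}: s<t_\alpha\}\cup\{\FLIPTWO{\alpha}{3}{4}\}$. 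So one application of $\tau$ does not suffice; you must follow the whole $\tau$-chain.

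Once you iterate $\tau$, a second obligation appears that your sketch never discharges: the $\tau$-chain of every string in the sub-block sharing $\alpha$'s relevant suffix eventually passes through $\alpha$ itself, and hitting $\alpha\in\mathbf{S}$ yields no contradiction, so you must prove the chain reaches an excluded string \emph{somewhere}. In the even cases \eqref{f2}--\eqref{f3} this is repairable with a structural lemma you have not stated: every string with $\ell_\gamma = m \ge 2$ has exactly two $\tau$-preimages, $\FLIP{\gamma}{m-1}$ and $\FLIPTWO{\gamma}{m-1}{m}$; applying this to $\alpha$ (when $t_\alpha=\ell_\alpha$) or to $\FLIP{\alpha}{t_\alpha}$ (when $t_\alpha<\ell_\alpha$) shows every intermediate's chain passes through $\FLIP{\alpha}{t_\alpha-1}$ or $\FLIPTWO{\alpha}{t_\alpha-1}{t_\alpha}$, both excluded. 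In case \eqref{f4}, however, the two entry points are $\FLIP{\alpha}{\ell_\alpha+1}$ (excluded by hypothesis) \emph{and $\alpha$ itself}, so you additionally need that no strict descendant of $\alpha$ in the poset lies strictly between $\alpha$ and $\FLIPTWO{\alpha}{\ell_\alpha}{\ell_\alpha+1}$ in BRGC order --- equivalently, that the intermediates are exactly the strict descendants of $\FLIP{\alpha}{\ell_\alpha+1}$. This is true (it hinges on a parity fact: descendants of $\gamma$ precede $\gamma$ in BRGC order when $w(\gamma)$ is odd, follow it when even), but it is a real lemma and it is absent; you yourself flag this as ``the main obstacle.'' Your fallback, induction on $n$ via the quotient decomposition, is likewise only named: the reversed $\cdot 1$ block turns ``successor'' into ``predecessor'' and flips weight parity, and without carrying out that bookkeeping the induction does not close. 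So the proposal is a plausible and completable strategy, but its central step is unproven, and the two missing lemmas are exactly where the mathematical content lies.
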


We will provide a detailed proof of this theorem in the next subsection.  Observe that each rule in $f$ complements at most two bits and thus successive strings in ${\bf S}$ differ
by at most two bit positions.  Observe that when $0^n$ is excluded from ${\bf S}$, then $\mathcal{BRGC}({\bf S})$ is still a $2$-Gray code (although not necessarily cyclic).
This proves Theorem~\ref{thm:main2}.

%====================================
%====================================
%====================================
%====================================
\subsection{Proof of Theorem~\ref{thm:big}}\label{sec:proof}% and Theorem~\ref{thm:2Gray}}\label{sec:proof}
%In this section, we prove
This section proves Theorem~\ref{thm:big}. %, Theorem~\ref{thm:2Gray} and Corollary~\ref{cor:2Gray}.
We begin with a lemma by Vajnovszki~\cite{vaj}, and a remark that is due to the fact that $0^{n-1}1$ is in a flip-swap language ${\bf S}$ when $|{\bf S}| > 1$.
%, Vajnovszki proved the following result. % related to BRGC order.
%======================
\begin{lemma}\label{lem:gray}
Let $\alpha = b_1 b_2 \cdots b_n$ and $\beta$ be length $n$ binary strings such that $\alpha \ne \beta$.  Let $r$ be the rightmost position in which $\alpha$ and $\beta$ differ.
Then $\alpha$ comes before $\beta$ in BRGC order (denoted by $\alpha \prec \beta$) if and only if $w(b_r b_{r+1} \cdots b_n)$ is even.
\end{lemma}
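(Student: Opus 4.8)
The plan is to prove the lemma by induction on $n$, driving the argument entirely off the recursive definition of $\BRGC(n) = \BRGC(n-1)\cdot 0,\ \BRGCREV(n-1)\cdot 1$. The base case $n=1$ is immediate: the only admissible pair is $\{\alpha,\beta\}=\{0,1\}$ with $r=1$, and $\alpha \prec \beta$ holds exactly when $\alpha = 0$, i.e. exactly when $w(b_1)=0$ is even. For the inductive step I would write $\alpha = b_1\cdots b_n$, $\beta = \beta_1\cdots\beta_n$, and let $\alpha' = b_1\cdots b_{n-1}$, $\beta' = \beta_1\cdots\beta_{n-1}$ be their length-$(n-1)$ prefixes, then split on whether the two strings agree in their last bit.

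First I would dispose of the case $b_n \ne \beta_n$, where $r = n$ and $w(b_r\cdots b_n) = b_n$. If $b_n = 0$ then $\alpha$ lies in the first block $\BRGC(n-1)\cdot 0$ while $\beta$ lies in the second block $\BRGCREV(n-1)\cdot 1$, so $\alpha \prec \beta$, matching $w(b_n)=0$ even; if $b_n = 1$ the blocks are swapped, so $\beta \prec \alpha$, matching $w(b_n)=1$ odd. Next I would treat $b_n = \beta_n$, so that $r < n$ is also the rightmost position at which $\alpha'$ and $\beta'$ differ. If $b_n = 0$, both strings sit in the first block and their order in $\BRGC(n)$ equals the order of $\alpha',\beta'$ in $\BRGC(n-1)$; since $b_n$ contributes nothing to the weight, the induction hypothesis applied to $\alpha',\beta'$ yields the claim directly.

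The crux is the subcase $b_n = 1$: here both strings lie in the second block $\BRGCREV(n-1)\cdot 1$, so their order in $\BRGC(n)$ is the \emph{reverse} of the order of $\alpha',\beta'$ in $\BRGC(n-1)$. Thus $\alpha \prec \beta$ holds iff $\beta'$ precedes $\alpha'$ in $\BRGC(n-1)$, which by the induction hypothesis happens iff $w(b_r\cdots b_{n-1})$ is \emph{odd}. The observation that closes the argument is that the trailing $b_n = 1$ adds exactly one to the weight, so $w(b_r\cdots b_n) = w(b_r\cdots b_{n-1}) + 1$ is even iff $w(b_r\cdots b_{n-1})$ is odd, precisely matching the reversal. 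Hence the parity criterion holds in every case.

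The only delicate point, and the step I would write most carefully, is this reversal: passing into $\BRGCREV(n-1)$ inverts the ordering predicted by the induction hypothesis, and one must verify that the single extra $1$ contributed by $b_n$ flips the weight parity in exactly the compensating way. Everything else is a mechanical unwinding of the two-block recursion, so I expect the write-up to be short once the reversal bookkeeping is pinned down.
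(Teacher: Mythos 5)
Your proof is correct, but note that the paper itself contains no proof of this lemma: it is stated as a known result, attributed to Vajnovszki's 2008 paper, and used as a black box in the proofs of Lemmas~7 and~8 and Theorem~6. Your induction on $n$ via the two-block recursion $\BRGC(n) = \BRGC(n-1)\cdot 0,\ \BRGCREV(n-1)\cdot 1$ therefore supplies a self-contained argument where the paper offers only a citation, and it is the natural one given the paper's definition of BRGC order (last bit determines the block, since $\cdot\,x$ appends to the end). All four cases check out, including the delicate one you flagged: when $b_n = \beta_n = 1$ the order of the prefixes is reversed, and since exactly one of $\alpha' \prec \beta'$ and $\beta' \prec \alpha'$ holds for distinct strings, the induction hypothesis gives $\beta' \prec \alpha'$ iff $w(b_r \cdots b_{n-1})$ is odd, which the appended $1$ converts to $w(b_r \cdots b_n)$ even, exactly as required. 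One small point worth making explicit in a final write-up: the lemma's parity condition is stated on $\alpha$'s suffix, so if you instead apply the induction hypothesis directly to the ordered pair $(\beta', \alpha')$ you must measure the weight of $\beta$'s suffix $\beta_r \cdots \beta_{n-1}$, which has opposite parity to $b_r \cdots b_{n-1}$ because the two strings differ precisely at position $r$; either route (totality of the order, or the parity swap at position $r$) closes the gap, but one of them should be said aloud.
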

%======================

%Since the last string in ${\bf B}(n)$ in BRGC order is $0^{n-1}1$~\cite{ruskey}, the following remark follows.
%Since $0^{n-1}1$ is an element of a flip-swap language ${\bf S}$ when $|{\bf S}| > 1$, the following remark follows.
%Since a flip-swap language ${\bf S}$ is a subset of ${\bf B}(n)$ and $0^{n-1}1 \in {\bf S}$ when $|{\bf S}| > 1$, the following remark follows.
%Since $0^{n-1}1$ is also an element of ${\bf S} \in {\bf B}(n)$, the following remark follows.
\begin{remark}\label{lem:last}
%The first, second and last pseudo-necklaces and necklaces in BRGC order are $0^n$,  $0^{n-2}11$ and $0^{n-1}1$ respectively.
%The last string of a flip-swap language ${\bf S}$ in BRGC order is $0^{n-1}1$ when $|{\bf S}| > 1$.
A flip-swap language ${\bf S}$ in BRGC order ends with $0^{n-1}1$ when $|{\bf S}| > 1$.
\end{remark}

Let $succ({\bf S}, \alpha)$ be the \emph{successor} of $\alpha$ in ${\bf S}$ in BRGC order (i.e. the string after $\alpha$ in the cyclic ordering $\mathcal{BRGC}({\bf S})$).
%element in ${\bf S}$ that appears immediately after $\alpha$ in BRGC order.
%the element in ${\bf S}$ that appears immediately after $\alpha$ in BRGC order, that is the successor of $\alpha$ in ${\bf S}$ in BRGC order.
Next we provide two lemmas, and then prove Theorem~\ref{thm:big}.

\begin{lemma} \label{thm:even}
Let ${\bf S}$ be a flip-swap language with $|{\bf S}| > 1$ and $\alpha$ be a string in ${\bf S}$. % be the string immediately after $\alpha$ in BRGC order.
Let $t_\alpha$ be the leftmost position such that $\FLIP{\alpha}{t_\alpha} \in {\bf S}$.
If $w(\alpha)$ is even, then $t_\alpha$ is the rightmost position in which $\alpha$ and $succ({\bf S}, \alpha)$ differ.
\end{lemma}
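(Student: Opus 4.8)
The plan is to pin the rightmost differing position down to $t_\alpha$ by sandwiching it from both sides, using Lemma~\ref{lem:gray} as the comparison tool and Lemma~\ref{lem:prefix} to exploit the ideal structure. Write $\beta = succ({\bf S},\alpha)$ and let $r$ be the rightmost position at which $\alpha$ and $\beta$ differ; the goal is $r = t_\alpha$. Since $w(\alpha)$ is even we have $\alpha \ne 0^{n-1}1$, so by Remark~\ref{lem:last} $\alpha$ is not the last string and $\beta$ is genuinely the $\prec$-least string $\gamma \in {\bf S}$ with $\alpha \prec \gamma$ (no wraparound). I would first record that $\FLIP{\alpha}{t_\alpha}$ is a legitimate target: it lies in ${\bf S}$ by the definition of $t_\alpha$, and since $t_\alpha \le \ell_\alpha$ every position strictly left of $\ell_\alpha$ is $0$, whence $w(b_{t_\alpha} b_{t_\alpha+1}\cdots b_n) = w(\alpha)$ is even. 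Applying Lemma~\ref{lem:gray} to the pair $(\alpha, \FLIP{\alpha}{t_\alpha})$, whose rightmost difference is $t_\alpha$, gives $\alpha \prec \FLIP{\alpha}{t_\alpha}$, and therefore $\alpha \prec \beta \preceq \FLIP{\alpha}{t_\alpha}$.

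Next I would prove the lower bound $r \ge t_\alpha$ by showing that no string of ${\bf S}\setminus\{\alpha\}$ differs from $\alpha$ at a rightmost position smaller than $t_\alpha$. Suppose $\gamma \in {\bf S}$ has rightmost differing position $r' < t_\alpha \le \ell_\alpha$. Then $b_{r'} = 0$, so $\gamma$ has a $1$ in position $r'$ and agrees with $\alpha$ on all positions past $r'$. Splitting $\gamma$ into its length-$(r'-1)$ prefix and the suffix $1\,b_{r'+1}\cdots b_n$ and invoking Lemma~\ref{lem:prefix} zeroes out the prefix, producing $\FLIP{\alpha}{r'} \in {\bf S}$; as $r' < t_\alpha$, this contradicts the leftmost-ness of $t_\alpha$. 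Hence every competitor, and in particular $\beta$, satisfies $r \ge t_\alpha$.

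Finally I would establish $r \le t_\alpha$ and conclude. Assume for contradiction that $r > t_\alpha$ and compare $\beta$ with the target $\FLIP{\alpha}{t_\alpha}$. Because $\FLIP{\alpha}{t_\alpha}$ agrees with $\alpha$ everywhere except at $t_\alpha < r$, while $\beta$ agrees with $\alpha$ on all positions beyond $r$ and differs at $r$, the two strings $\FLIP{\alpha}{t_\alpha}$ and $\beta$ have rightmost difference exactly $r$, and the suffix of $\FLIP{\alpha}{t_\alpha}$ from position $r$ coincides with $b_r b_{r+1}\cdots b_n$. Since $\alpha \prec \beta$ forces $w(b_r b_{r+1}\cdots b_n)$ to be even, Lemma~\ref{lem:gray} applied to $(\FLIP{\alpha}{t_\alpha}, \beta)$ yields $\FLIP{\alpha}{t_\alpha} \prec \beta$, contradicting $\beta \preceq \FLIP{\alpha}{t_\alpha}$. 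Therefore $r = t_\alpha$. I expect the crux to be the lower-bound step: it hinges on using Lemma~\ref{lem:prefix} to scrub the prefix of $\gamma$ without disturbing its suffix, so that the reduction lands precisely on $\FLIP{\alpha}{r'}$ and collides with the minimality of $t_\alpha$; once that is in hand, the two comparisons via Lemma~\ref{lem:gray} are routine bookkeeping about which suffix weights are even.
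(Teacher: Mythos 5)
Your proof is correct and follows essentially the same route as the paper's: it rules out a rightmost differing position $r < t_\alpha$ by zeroing out the prefix (your Lemma~\ref{lem:prefix} invocation is just the packaged form of the paper's direct appeal to the flip-first property), and rules out $r > t_\alpha$ by using Lemma~\ref{lem:gray} twice to place $\FLIP{\alpha}{t_\alpha}$ strictly between $\alpha$ and $succ({\bf S},\alpha)$, exactly as in the paper. The one refinement you add is explicitly dispensing with the cyclic wraparound via Remark~\ref{lem:last} (since $w(\alpha)$ even implies $\alpha \neq 0^{n-1}1$), a detail the paper leaves implicit.
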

\begin{proof}
By contradiction.
Let $\alpha = b_1 b_2 \cdots b_n$ and $\beta = succ({\bf S}, \alpha)$.
% = 0^j 1 a_{j+2} a_{j+3} \cdots a_n$ and
Let $r$ be the rightmost position in which $\alpha$ and $\beta$ differ with $r \neq t_\alpha$.
%There are two cases depending on the position of $t'$.
If $t_\alpha > r$, then $\beta$ has the suffix $1b_{r+1} b_{r+2} \cdots b_n$ since $b_r = 0$ because $r<t_\alpha \leq \ell_\alpha$.
Thus by the {flip-first} property, $0^{r-1}1b_{r+1} b_{r+2} = \FLIP{\alpha}{r} \in {\bf S}$ and $r<t_\alpha$, a contradiction.

Otherwise if $t_\alpha < r$, then let $\gamma = \FLIP{\alpha}{t_\alpha}$.
Clearly $\gamma \neq \alpha$.
Now observe that $w(b_t b_{t+1} \cdots b_n)$ is even because $t_\alpha \leq \ell_\alpha$ and $w(\alpha)$ is even, and thus by Lemma~\ref{lem:gray}, $\alpha \prec \gamma$.
%Thus, $\alpha \prec \beta \prec \gamma$ since $\beta$ is the successor of $\alpha$.
%Observe that $\alpha \prec \beta \prec \gamma$ since $\sum_{i=t}^n b_i$ is even and $\beta$ is the successor of $\alpha$.
%Thus, $\gamma$ has the suffix $b_{t'} b_{t'+1} \cdots b_n$.
Also, $\gamma$ has the suffix $b_{r} b_{r+1} \cdots b_n$ and $w(b_{r} b_{r+1} \cdots b_n)$ is even because $\alpha \prec \beta$ and $r$ is the rightmost position $\alpha$ and $\beta$ differ, and thus also by Lemma~\ref{lem:gray}, $\gamma \prec \beta$.
Thus $\alpha \prec \gamma \prec \beta$, a contradiction.
Therefore $r = t_\alpha$.
\end{proof}

\begin{lemma} \label{thm:odd}
Let ${\bf S}$ be a flip-swap language with $|{\bf S}| > 1$ and $\alpha\neq 0^{n-1}1$ be a string in ${\bf S}$. %, and $\beta = next({\bf S}, \alpha)$.
%Let $j$ be the largest value such that $0^j$ is a prefix of $\alpha$.
If $w(\alpha)$ is odd, then $\ell_\alpha +1$ is the rightmost position in which $\alpha$ and $succ({\bf S}, \alpha)$ differ.
\end{lemma}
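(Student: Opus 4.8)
The plan is to mirror the contradiction argument used for the even case in Lemma~\ref{thm:even}, but now to pin the rightmost differing position to $\ell_\alpha+1$ using a parity argument on one side and an explicit intermediate string on the other. Write $\alpha = b_1 b_2 \cdots b_n$, set $\ell = \ell_\alpha$ and $\beta = succ({\bf S},\alpha)$, and let $r$ be the rightmost position where $\alpha$ and $\beta$ differ. Since $w(\alpha)$ is odd we have $\alpha \neq 0^n$, and since $\alpha \neq 0^{n-1}1$ we have $\ell \leq n-1$, so positions $\ell$ and $\ell+1$ are both valid. Because $0^{n-1}1$ is the last string in BRGC order by Remark~\ref{lem:last} and $\alpha \neq 0^{n-1}1$, the successor genuinely follows $\alpha$, i.e.\ $\alpha \prec \beta$; hence $w(b_r b_{r+1}\cdots b_n)$ is even by Lemma~\ref{lem:gray}. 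The goal is to show $r = \ell+1$, which I will do by ruling out $r \leq \ell$ and $r \geq \ell+2$.

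The lower bound $r \geq \ell+1$ is pure bookkeeping. If $r \leq \ell$, then the positions $r,\dots,\ell-1$ (if any) are all $0$ and every $1$ of $\alpha$ sits at a position $\geq \ell$, so $w(b_r b_{r+1}\cdots b_n) = w(\alpha)$, which is odd. This contradicts the evenness forced by $\alpha \prec \beta$, so $r \geq \ell+1$.

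For the upper bound $r \leq \ell+1$ I argue by contradiction, assuming $r \geq \ell+2$ and exhibiting a string of ${\bf S}$ strictly between $\alpha$ and $\beta$. The key step is to show that $\gamma := \FLIPTWO{\alpha}{\ell}{\ell+1} \in {\bf S}$. Computing $\tau(\alpha)$, both the flip-first branch (when $b_{\ell+1}=1$) and the swap-first branch (when $b_{\ell+1}=0$) yield the same string $\tau(\alpha) = 0^{\ell} 1 b_{\ell+2}\cdots b_n$, which lies in ${\bf S}$ because it is obtained from $\alpha \in {\bf S}$ by a closure operation and is not $0^n$; applying Lemma~\ref{lem:prefix} to it also gives $0^{\ell+1} b_{\ell+2}\cdots b_n \in {\bf S}$. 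Since $\gamma$ equals the former when $b_{\ell+1}=0$ and the latter when $b_{\ell+1}=1$, in either case $\gamma \in {\bf S}$. Now $\gamma$ differs from $\alpha$ only in positions $\ell$ and $\ell+1$, so its rightmost difference from $\alpha$ is $\ell+1$, and the suffix $b_{\ell+1}\cdots b_n$ of $\alpha$ has weight $w(\alpha)-1$, which is even; hence $\alpha \prec \gamma$ by Lemma~\ref{lem:gray}. Moreover, since $r \geq \ell+2$, both $\gamma$ and $\beta$ agree with $\alpha$ at every position greater than $r$, while at position $r$ we have $\gamma_r = b_r \neq \beta_r$; thus the rightmost position where $\gamma$ and $\beta$ differ is $r$, and the relevant suffix of $\gamma$ equals $b_r \cdots b_n$, whose weight is even. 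Lemma~\ref{lem:gray} then gives $\gamma \prec \beta$, so $\alpha \prec \gamma \prec \beta$ with $\gamma \in {\bf S}$, contradicting $\beta = succ({\bf S},\alpha)$.

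Combining the two bounds yields $r = \ell+1$. The main obstacle is the single step of certifying $\gamma = \FLIPTWO{\alpha}{\ell}{\ell+1} \in {\bf S}$: unlike the even case, where the leftmost admissible flip $t_\alpha$ is handed to us, here I must manufacture the witness from the closure properties, and the clean way to do so is the observation that $\tau(\alpha)$ collapses to the same string $0^\ell 1 b_{\ell+2}\cdots b_n$ in both branches, after which Lemma~\ref{lem:prefix} dispatches the remaining subcase. Everything else is routine parity tracking through Lemma~\ref{lem:gray}.
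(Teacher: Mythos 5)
Your proof is correct and follows essentially the same route as the paper's: rule out $r<\ell_\alpha+1$ by the parity criterion of Lemma~\ref{lem:gray}, and rule out $r>\ell_\alpha+1$ by exhibiting $\gamma = \FLIPTWO{\alpha}{\ell_\alpha}{\ell_\alpha+1} \in {\bf S}$ with $\alpha \prec \gamma \prec \beta$. The only difference is presentational: where the paper simply asserts $\gamma \in {\bf S}$ ``by the flip-first and swap-first properties,'' you justify it explicitly via $\tau(\alpha)$ and Lemma~\ref{lem:prefix} (and you also make explicit why $\alpha \prec \beta$ holds, via Remark~\ref{lem:last}), which is slightly more careful but not a different argument.
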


%\vspace{-0.15in}
\begin{proof}
%The proof is similar to the proof of Lemma~\ref{thm:even}.
Since $\alpha \neq 0^{n-1}1$ and $w(\alpha)$ is odd, $\ell_\alpha < n-1$.
We now prove the lemma by contradiction.
Let $\alpha = b_1 b_2 \cdots b_n$ and $\beta =  succ({\bf S}, \alpha)$.
Let $r \neq \ell_\alpha + 1$ be the rightmost position in which $\alpha$ and $\beta$ differ.
%Observe that $t' > j+2$ because $w(a_{t'} a_{t'+1} \cdots a_n)$ is odd when $t' < j+1$, a contradiction to $\alpha \prec \beta$.
If $r < \ell_\alpha + 1$, then $w(b_{r} b_{r+1} \cdots b_n)$ is odd but $\alpha \prec \beta$, a contradiction by Lemma~\ref{lem:gray}.
%Let $\gamma = a_1 a_2 \cdots a_j \overline{a}_{j+1} \overline{a}_{j+2} a_{j+3} \cdots a_n$.
Otherwise if $r > \ell_\alpha + 1$, then let $\gamma = \FLIPTWO{\alpha}{\ell_\alpha}{\ell_\alpha + 1}$.
Clearly $\gamma \neq \alpha$, and by the {flip-first} and {swap-first} properties, $\gamma \in {\bf S}$.
Also, observe that $w(b_{\ell_\alpha + 1} b_{\ell_\alpha + 2} \cdots b_n)$ is even because $w(\alpha)$ is odd, and thus by Lemma~\ref{lem:gray}, $\alpha \prec \gamma$.
%Thus, $\alpha \prec \beta \prec \gamma$ since $\beta$ is the successor of $\alpha$.
%$\alpha \prec \beta \prec \gamma$ since $\sum_{i=j+2}^n b_i$ is even and $\gamma$ is the necklace just after $\beta$.
%Thus, $\gamma$ has the suffix $b_{t'} b_{t'+1} \cdots b_n$.
Further, $\gamma$ has the suffix $b_{r} b_{r+1} \cdots b_n$ and $w(b_{r} b_{r+1} \cdots b_n)$ is even because $\alpha \prec \beta$ and $r$ is the rightmost position $\alpha$ and $\beta$ differ, and thus also by Lemma~\ref{lem:gray}, $\gamma \prec \beta$.
Thus $\alpha \prec \gamma \prec \beta$, a contradiction.
Therefore $r = \ell_\alpha + 1$. % is the rightmost position in which $\alpha$ and $\beta$ differ.
%However, observe that $w(a_{t'} a_{t'+1} \cdots a_n)$ is odd since $\alpha$ and $\beta$ differ at position $t'$, a contradiction to $\beta \prec \gamma$.
\end{proof}

%Now we use Lemma~\ref{thm:even} and Lemma~\ref{thm:odd} to prove Theorem~\ref{thm:big}.

\begin{proof}[Proof of Theorem~\ref{thm:big}]
Let $\alpha = a_1 a_2 \cdots a_n$ and $\beta = succ({\bf S}, \alpha) = b_1 b_2 \cdots b_n$.
Let $t_\alpha$ be the leftmost position such that $\FLIP{\alpha}{t_\alpha} \in {\bf S}$.
First we consider the case when $\alpha = 0^{n-1}1$.
Recall that the first string in ${\bf B}(n)$ in BRGC order is $0^n$~\cite{ruskey} and $0^n$ is a string in ${\bf S}$ by Lemma~\ref{lem:prefix}.
Also, the last string in ${\bf S}$ in BRGC order is $0^{n-1}1$ by Remark~\ref{lem:last} when $|{\bf S}| > 1$.
Thus the string that appears immediately after $\alpha$ in the cyclic ordering $\mathcal{BRGC}({\bf S})$ is $f(\alpha)$ when $\alpha = 0^{n-1}1$.
In the remainder of the proof, $\alpha \neq 0^{n-1}1$ and we consider the following two cases.
\begin{description}
\item[Case 1:] $w(\alpha)$ is even:\
If $t_\alpha=1$, then clearly $\beta = \FLIP{\alpha}{t_\alpha}= f(\alpha)$.
For the remainder of the proof, $t_\alpha>1$.

Since $t_\alpha \leq \ell_\alpha$, $\FLIPTWO{\alpha}{t_\alpha-1}{t_\alpha}$ has the prefix $0^{t_\alpha-2}1$.
We now consider the following two cases.
If $\FLIPTWO{\alpha}{t_\alpha-1}{t_\alpha} \notin {\bf S}$, then $\FLIP{\alpha}{t_\alpha}$ is the only string in ${\bf S}$ that has $t_\alpha$ as the rightmost position that differ with $\alpha$ and has the prefix $0^{t-2}$.
Therefore, $\beta = \FLIP{\alpha}{t_\alpha} = f(\alpha)$.
Otherwise, $\FLIPTWO{\alpha}{t_\alpha-1}{t_\alpha}$ and $\FLIP{\alpha}{t_\alpha}$ are the only strings in ${\bf S}$ that have $t_\alpha$ as the rightmost position that differ with $\alpha$ and have the prefix $0^{t_\alpha-2}$.
By Lemma~\ref{lem:gray}, $\FLIPTWO{\alpha}{t_\alpha-1}{t_\alpha} \prec \FLIP{\alpha}{t_\alpha}$ since $w(1\overline{a}_{t_\alpha}  a_{t_\alpha+1} a_{t_\alpha+2} \cdots a_n)$ is even. %$w(0^{t-2} 1 \overline{b_t} b_{t+1})$ is even.
Thus, $\beta = \FLIPTWO{\alpha}{t_\alpha-1}{t_\alpha} = f(\alpha)$.
\item[Case 2:] $w(\alpha)$ is odd:\ % and $\alpha \neq 0^{n-1}1$:\
By Lemma~\ref{thm:odd}, $\beta$ has the suffix $\overline{a}_{\ell_\alpha + 1} a_{\ell_\alpha + 2} a_{\ell_\alpha + 3} \cdots a_n$.
If $\FLIP{\alpha}{\ell_\alpha+1} \notin {\bf S}$, then by the {flip-first} and {swap-first} properties, $\FLIPTWO{\alpha}{\ell_\alpha}{\ell_\alpha + 1}$ is the only string in ${\bf S}$ that has $\ell_\alpha + 1$ as the rightmost position that differ with $\beta$.
Thus, $\beta = \FLIPTWO{\alpha}{\ell_\alpha}{\ell_\alpha + 1} = f(\alpha)$.
Otherwise by Lemma~\ref{lem:gray}, any string $\gamma \in {\bf S}$ with the suffix $\overline{a}_{\ell_\alpha + 1} a_{\ell_\alpha + 2} a_{\ell_\alpha + 3} \cdots a_n$
and $\gamma \neq \FLIP{\alpha}{\ell_\alpha+1}$ has $\FLIP{\alpha}{\ell_\alpha +1} \prec \gamma$ because $w(1 \overline{a}_{ \ell_\alpha + 1} a_{\ell_\alpha + 2}a_{\ell_\alpha + 3} \cdots a_n)$ is even.
Thus, $\beta = \FLIP{\alpha} {\ell_\alpha + 1} = f(\alpha)$.
\end{description}
Therefore, the string immediately after $\alpha$ in the cyclic ordering $\mathcal{BRGC}({\bf S})$ is $f(\alpha)$.
\end{proof}

\section{Generation algorithm for flip-swap languages} \label{sec:gen_n}
In this section we present a generic algorithm to generate  $2$-Gray codes for flip-swap languages via the function $f$.
%The algorithm can also generate $2$-Gray codes for near flip-swap languages.  %over ${\bf B}(n)$.

A na\"{i}ve approach to implement $f$ is to find $t_\alpha$ by test flipping each bit in $\alpha$ to see if the result is also in the set when $w(\alpha)$ is even; or test flipping the ($\ell_\alpha+1$)-th bit of $\alpha$ to see if the result is also in the set when $w(\alpha)$ is odd.
Since $t_\alpha \leq \ell_\alpha$, we only need to examine the length $\ell_\alpha-1$ prefix of $\alpha$ to find $t_\alpha$.
Such a test can be done in $O(nm)$ time, where $O(m)$ is the time required to complete the membership test of the set under consideration.
Pseudocode of the function $f$ is given in Algorithm~\ref{alg:n2-f}.

To list out each string of a flip-swap language ${\bf S}$ in BRGC order, we can  repeatedly apply the function $f$ until it reaches the starting string.
We also maintain $w(\alpha)$ and $\ell_\alpha$ which can be easily maintained in $O(n)$ time for each string generated.
%To make the algorithm capable to generate near flip-swap languages over ${\bf B}(n)$,
We also add a condition to avoid printing the string $0^n$ if $0^n$ is not a string in ${\bf S}$.
Pseudocode for this algorithm, starting with the string $0^n$, is given in Algorithm~\ref{alg:n2-decode}.
The algorithm can easily be modified to generate the corresponding counterpart of ${\bf S}$ with respect to $0$.

%A complete C implementation of the algorithm is given in the Appendix.

A simple analysis shows that the algorithm generates ${\bf S}$ in $O(nm)$-time per string.
A more thorough analysis improves this to $O(n+m)$-amortized time per string.

\begin{theorem} \label{lem:n2}
%The algorithm \textit{BRGC} lists out each string of a flip-swap language or a near flip-swap language ${\bf S}$ over ${\bf B}(n)$ in BRGC order in $O(n+m)$-amortized time per string, where $O(m)$ is the time required to complete the membership tester for ${\bf S}$.
%function $f$ can be computed in $O(n^2)$ time for %sets discussed in Section~\ref{sec:examples}.
If ${\bf S}$ is a flip-swap language, then the algorithm  \textit{BRGC}  produces
$\mathcal{BRGC}(S)$ in $O(n+m)$-amortized time per string, where $O(m)$ is the time required to complete the membership tester for ${\bf S}$.

\end{theorem}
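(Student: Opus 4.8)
The plan is to bound the total running time of the generation loop by $O((n+m)|\mathbf{S}|)$, after which dividing by the number $|\mathbf{S}|$ of strings produced yields the claimed $O(n+m)$ amortized figure. I would split the work spent computing each successor $f(\alpha)$ into two parts: \emph{bookkeeping} (maintaining the current string together with $w(\alpha)$ and $\ell_\alpha$, carrying out the one or two bit changes dictated by rules~(\ref{f1})--(\ref{f5}), and suppressing $0^n$ when $0^n\notin\mathbf{S}$) and \emph{membership queries}, each costing $O(m)$. The bookkeeping is $O(n)$ per string: by Theorem~\ref{thm:big} each successor differs from $\alpha$ in at most two positions, and $w(\alpha)$ and $\ell_\alpha$ shift by $O(1)$ under a flip or flip2 and are updated in $O(1)$ once the affected indices are known. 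Correctness of the loop is inherited from the earlier results: $0^n\in\mathbf{S}$ by Lemma~\ref{lem:prefix}, and by Remark~\ref{lem:last} the listing ends at $0^{n-1}1$, from which rule~(\ref{f1}) closes the cycle. Thus the bookkeeping contributes $O(n|\mathbf{S}|)$ in total, which is within budget.

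The real content is the query count. For an odd-weight $\alpha\neq 0^{n-1}1$, evaluating $f(\alpha)$ needs only the single test $\FLIP{\alpha}{\ell_\alpha+1}\in\mathbf{S}$ separating rules~(\ref{f4}) and~(\ref{f5}), i.e.\ $O(1)$ queries. For an even-weight $\alpha$ the rule first needs $t_\alpha$, and then one further test $\FLIPTWO{\alpha}{t_\alpha-1}{t_\alpha}\in\mathbf{S}$ to choose between~(\ref{f2}) and~(\ref{f3}); the naive left-to-right scan for $t_\alpha$ issues up to $\ell_\alpha$ queries, which is exactly what forces the naive $O(nm)$ bound. The whole theorem therefore reduces to showing that, aggregated over the entire listing, these scans contribute only $O(n+m)$ amortized per string, so that the membership tester is effectively invoked $O(1)$ times amortized per string (or each scan step is reduced to $O(1)$ work after an $O(m)$ set-up for $\alpha$).

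To control the scans I would exploit that the scan searches along a chain of the flip-swap poset. For $i<\ell_\alpha-1$ one checks $\tau(\FLIP{\alpha}{i})=\FLIP{\alpha}{i+1}$ (swap-first), and $\tau(\FLIP{\alpha}{\ell_\alpha-1})=\alpha$ (flip-first), so $\alpha<_R\FLIP{\alpha}{\ell_\alpha-1}<_R\cdots<_R\FLIP{\alpha}{1}$ is a chain lying above $\alpha$ in $\mathcal{P}(n)$. Since $\mathbf{S}$ is an ideal (Lemma~\ref{lem:2BRGC-poset}) it is downward closed along this chain, so membership is monotone in $i$ with threshold $t_\alpha$: the scan's failures are precisely the top $t_\alpha-1$ chain elements, each just outside $\mathbf{S}$ while their common descendant $\FLIP{\alpha}{t_\alpha}$ lies inside. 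The plan is then to amortize these failed queries against the motion of the leftmost one, using a potential such as the number of leading zeros $\ell_\alpha-1$. Rules~(\ref{f2})--(\ref{f3}) that succeed early ($t_\alpha<\ell_\alpha$) move the leftmost one left to $t_\alpha$ or $t_\alpha-1$, the expensive case $t_\alpha=\ell_\alpha$ applies flip-first and moves it right, and the odd-weight rules~(\ref{f4})--(\ref{f5}) pay $O(1)$ while fixing or advancing the potential; combining these per-step changes and telescoping over the cyclic listing is intended to cap the accumulated scan cost.

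The step I expect to be the main obstacle is exactly this amortization of the even-weight scans. The difficulty is a genuine tension: a single expensive scan can cost $\Theta(\ell_\alpha)$ queries while moving the leftmost one by as little as one position, so a bare leading-zeros potential does not pay for it directly. The accounting must instead use that an expensive scan with $t_\alpha=\ell_\alpha$ feeds, through rule~(\ref{f2}), into a stretch of cheaper successors before the leftmost one can return to a small index, and must charge the scan across that stretch (equivalently, exhibit an injective assignment of queries to generated strings that survives all five cases of $f$). Pinning down this charging -- and, for the languages where the from-scratch tester is costly, arranging that each scan step costs $O(1)$ after an $O(m)$ preprocessing of $\alpha$ -- is the crux; once the aggregate query work is $O((n+m)|\mathbf{S}|)$, adding the $O(n|\mathbf{S}|)$ bookkeeping gives the stated $O(n+m)$ amortized bound.
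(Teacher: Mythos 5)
Your reduction is sound as far as it goes: the bookkeeping is indeed $O(n)$ per string, the odd-weight case needs only $O(1)$ queries, and your observation that the scan walks down a chain $\alpha <_R \FLIP{\alpha}{\ell_\alpha-1} <_R \cdots <_R \FLIP{\alpha}{1}$ of $\mathcal{P}(n)$, so that membership is monotone with threshold $t_\alpha$, is correct (this is the poset-language of the paper's remark that $t_\alpha \leq \ell_\alpha$). But the proposal stops exactly where the theorem's content begins: the charging argument is never exhibited, and you say so yourself (``pinning down this charging \ldots is the crux''). Reducing the statement to its hard step and flagging that step as an obstacle is not a proof. Moreover, the tension you invoke to justify the potential-function detour is miscalculated: if the while loop in Algorithm~\ref{alg:n2-f} issues $q \geq 2$ queries for an even-weight $\alpha$, then $t_\alpha = \ell_\alpha - q + 1$ (all queries but the last succeed), and the successor under rule (\ref{f2}) or (\ref{f3}) has its leftmost $1$ at $t_\alpha$ or $t_\alpha - 1$, so an expensive scan moves the leftmost $1$ \emph{left} by at least $q-1$ positions; it cannot cost $\Theta(\ell_\alpha)$ while moving it by one position. (Also, a scan incurs at most one failed query, not $t_\alpha - 1$ of them.) So a leading-zeros potential does pay for every scan; what it leaves uncontrolled is the opposite side of the ledger --- the cheap steps, rule (\ref{f2}) with $t_\alpha = \ell_\alpha$ and rule (\ref{f4}) when $b_{\ell_\alpha+1}=1$, which jump the leftmost $1$ far to the right and whose total potential increase would still need an $O(|{\bf S}|)$ bound. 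Neither half of this accounting is carried out.

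The missing idea is a one-line injection, which is how the paper closes the argument. Every string $\beta$ tested by the while loop during the scan for an even-weight $\alpha$ has the form $\beta = \FLIP{\alpha}{i}$ with $i < \ell_\alpha$: it agrees with $\alpha$ from position $\ell_\alpha$ onward and has exactly one $1$ before that position, namely at $i = \ell_\beta$. Hence $\alpha = \FLIP{\beta}{\ell_\beta}$ is recoverable from $\beta$, so scans for distinct strings of ${\bf S}$ can never test the same string, and since the listing visits each member of ${\bf S}$ exactly once per cycle (Theorem~\ref{thm:big}), each string of ${\bf B}(n)$ is tested at most once by all scans combined. The successful tests are therefore injectively mapped into ${\bf S}$, each scan has at most one failed test, and lines outside the loop contribute $O(1)$ further queries per generated string; so the total number of membership queries over the whole run is $O(|{\bf S}|)$, i.e.\ $O(m)$ amortized, which together with the $O(n)$ bookkeeping gives the claimed $O(n+m)$ amortized bound. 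This is precisely the ``injective assignment of queries to generated strings'' you asked for; it falls out of the structure of the tested strings themselves, with no potential function or multi-step charging needed.
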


\begin{proof}
%It is easy to see that the main overhead of the algorithm \textit{BRGC} is the function $f$.
Let $\alpha = a_1 a_2 \cdots a_n$ be a string in  ${\bf S}$.
Clearly $f$ can be computed in $O(n)$ time when $w(\alpha)$ is odd.
Otherwise when $w(\alpha)$ is even, the {\bf while} loop in line 5 of Algorithm~\ref{alg:n2-f} performs a membership tester on each string $\beta = b_1 b_2 \cdots b_n$ in ${\bf S}$ with $b_{\ell_\alpha } b_{\ell_\alpha + 1} \cdots b_n = a_{\ell_\alpha} a_{\ell_\alpha + 1} \cdots a_n$ and $w(b_1 b_2 \cdots b_{\ell_\alpha -1}) = 1$.
Observe that each of these strings can only be examined by the membership tester once, or otherwise the {\bf while} loop in line 5 of Algorithm~\ref{alg:n2-f} produces the same $t_\alpha$ which results in a duplicated string, a contradiction.
Thus, the total number of membership testers performed by the algorithm is bound by $|{\bf S}|$, and therefore $f$ runs in $O(m)$-amortized time per string.
Finally, since the other part of the algorithm runs in $O(n)$ time per string, the algorithm \textit{BRGC} runs in $O(n+m)$-amortized time per string.
\end{proof}

The membership tests in this paper can be implemented in $O(n)$ time and $O(n)$ space; see~\cite{Booth,journals/jal/Duval83,Sawada201346} for necklaces, Lyndon words, prenecklaces and pseudo-necklaces of length $n$.
One exception is the test for prefix normal words of length $n$, which requires $O(n^{1.864})$ time and $O(n)$ space~\cite{Chan:2015:CIV:2746539.2746568}.
Together with the above theorem, this proves Theorem~\ref{thm:main3}.

%A C program that generates all of our listed flip-swap languages is available on
Visit the Combinatorial Object Server~\cite{cos} for a C implmentation of our algorithms.

\begin{algorithm}[t]
\footnotesize
\caption{Pseudocode of the implementation of the function $f$.}
\label{alg:n2-f}

\vspace{-0.6em}
\begin{algorithmic} [1]
\Statex

\Function{$f$}{$\alpha$}

	\If{$\alpha = 0^{n-1}1$} \ $\FLIP{\alpha}{n}$
	\ElsIf{$w(\alpha)$ is even}

		\State $t_\alpha \gets \ell_\alpha$
		\While{$t_\alpha>1$  {\bf and} $\FLIP{\alpha}{t_\alpha-1} \in {\bf S}$} \ $t_\alpha \gets t_\alpha - 1$
		\EndWhile

				\If{$t_\alpha \neq 1$  {\bf and} $\FLIPTWO{\alpha}{t_\alpha-1}{t_\alpha} \in {\bf S}$} \ $\alpha \gets \FLIPTWO{\alpha}{t_\alpha-1}{t_\alpha}$
				\Else \ $\alpha \gets \FLIP{\alpha}{t_\alpha}$
				\EndIf

	\Else

		\If{$\FLIP{\alpha}{\ell_\alpha+1} \notin {\bf S}$} \ $\alpha \gets \FLIPTWO{\alpha}{\ell_\alpha}{\ell_\alpha+1}$
		\Else \ $\alpha \gets \FLIP{\alpha}{\ell_\alpha+1}$
		\EndIf
	\EndIf

\EndFunction
\end{algorithmic}
\end{algorithm}

\begin{algorithm}[t]
\footnotesize
\caption{Algorithm to list out each string of a flip-swap language ${\bf S}$ in BRGC order.}
\label{alg:n2-decode}

\vspace{-0.6em}
\begin{algorithmic} [1]
\Statex

\Procedure{\textit{BRGC}}{}
%\State $\textrm{Print($0$)}$
%\State{$\alpha = a_1 a_2 \cdots a_n \gets 0^n$}
\State{$\alpha = b_1 b_2 \cdots b_n \gets 0^n$}
%\While{$\alpha \neq 0^{n-1}1$}
\Do
%	\EndIf
	\If{$\alpha \neq 0^n$ {\bf or} $0^n \in {\bf S}$} \ $\textrm{Print$(\alpha)$}$ \EndIf
	\State $f(\alpha)$
	\State $w(\alpha) \gets 0$
	\For{$i$ {\bf from} $n$ {\bf down to} $1$}
			\If{$b_i = 1$}  \ $w(\alpha) \gets w(\alpha) + 1$ \EndIf
			\If{$b_i = 1$}  \ $\ell_\alpha \gets i$ \EndIf
	\EndFor

\doWhile{$\alpha \neq 0^{n}$}
%\EndWhile
%\State $\textrm{Print$(0^{n-1}1)$}$
\EndProcedure
\end{algorithmic}
\end{algorithm}

{
\footnotesize
\bibliographystyle{abbrv}
\bibliography{refs}}

\begin{thebibliography}{10}

\bibitem{MattersComputational}
J.~Arndt.
\newblock {\em Matters Computational: Ideas, Algorithms, Source Code}.
\newblock Springer, 2011.

\bibitem{Bacchelli2004}
S.~Bacchelli, E.~Barcucci, E.~Grazzini, and E.~Pergola.
\newblock Exhaustive generation of combinatorial objects by {ECO}.
\newblock {\em Acta Informatica}, 40(8):585--602, 2004.

\bibitem{Booth}
K.~S. Booth.
\newblock Lexicographically least circular substrings.
\newblock {\em Inf. Process. Lett.}, 10(4/5):240--242, 1980.

\bibitem{Bultena:1998:EAW:306049.306064}
B.~Bultena and F.~Ruskey.
\newblock An {E}ades-{M}c{K}ay algorithm for well-formed parenthesis strings.
\newblock {\em Inf. Process. Lett.}, 68(5):255--259, 1998.

\bibitem{Chan:2015:CIV:2746539.2746568}
T.~M. Chan and M.~Lewenstein.
\newblock Clustered integer {3SUM} via additive combinatorics.
\newblock In {\em Proceedings of the Forty-seventh Annual ACM Symposium on
  Theory of Computing, STOC 15}, pages 31--40, New York, NY, USA, 2015.

\bibitem{cos}
COS{++}.
\newblock The {C}ombinatorial {O}bject {S}erver.
\newblock \url{http://combos.org/brgc}.

\bibitem{journals/jal/Duval83}
J.~P. Duval.
\newblock Factorizing words over an ordered alphabet.
\newblock {\em J. Algorithms}, 4(4):363--381, 1983.

\bibitem{Ehrlich:1973:LAG:321765.321781}
G.~Ehrlich.
\newblock Loopless algorithms for generating permutations, combinations, and
  other combinatorial configurations.
\newblock {\em J. ACM}, 20(3):500--513, 1973.

\bibitem{concrete}
R.~Graham, D.~Knuth, and O.~Patashnik.
\newblock {\em Concrete Mathematics: A Foundation for Computer Science}.
\newblock Addison-Wesley Longman Publishing Co., Inc., Boston, MA, USA, 1994.

\bibitem{gray-pulse-code-communication-1953}
F.~Gray.
\newblock Pulse code communication, 1953.
\newblock US Patent 2,632,058.

\bibitem{10.5555/3381089.3381163}
E.~Hartung, H.~P. Hoang, T.~M\"{u}tze, and A.~Williams.
\newblock Combinatorial generation via permutation languages.
\newblock In {\em Proceedings of the Thirty-First Annual ACM-SIAM Symposium on
  Discrete Algorithms}, SODA '20, page 1214–1225, USA, 2020.

\bibitem{opac-b1101743}
D.~Knuth.
\newblock {\em The Art of Computer Programming. Volume 4, fascicule 2.,
  Generating all tuples and permutations}.
\newblock The Art of Computer Programming. Addison-Wesley, Upper Saddle River
  (N.J.), 2005.
\newblock Autre tirage : 2010.

\bibitem{Li2009296}
Y.~Li and J.~Sawada.
\newblock Gray codes for reflectable languages.
\newblock {\em Inf. Process. Lett.}, 109(5):296 -- 300, 2009.

\bibitem{middleLMS}
T.~M\"{u}tze.
\newblock Proof of the middle levels conjecture.
\newblock {\em Proceedings of the London Mathematical Society},
  112:677–--713, 2016.

\bibitem{ruskey}
F.~Ruskey.
\newblock {\em Combinatorial Generation}.
\newblock Working version (1j-{CSC} 425/520) edition, 2003.

\bibitem{Ruskey199068}
F.~Ruskey and A.~Proskurowski.
\newblock Generating binary trees by transpositions.
\newblock {\em J. Algorithms}, 11(1):68 -- 84, 1990.

\bibitem{Ruskey2012155}
F.~Ruskey, J.~Sawada, and A.~Williams.
\newblock Binary bubble languages and cool-lex order.
\newblock {\em J. Comb. Theory Ser. A}, 119(1):155 -- 169, 2012.

\bibitem{Ruskey:2008:GBP:1379361.1379382}
F.~Ruskey and A.~Williams.
\newblock Generating balanced parentheses and binary trees by prefix shifts.
\newblock In {\em Proceedings of the Fourteenth Symposium on Computing: The
  Australasian Theory - Volume 77}, CATS '08, pages 107--115, Darlinghurst,
  Australia, 2008.

\bibitem{Savage:1997:SCG:273590.273592}
C.~Savage.
\newblock A survey of combinatorial {G}ray codes.
\newblock {\em SIAM Rev.}, 39(4):605--629, 1997.

\bibitem{Sawada201346}
J.~Sawada and A.~Williams.
\newblock A {G}ray code for fixed-density necklaces and {L}yndon words in
  constant amortized time.
\newblock {\em Theor. Comput. Sci.}, 502:46 -- 54, 2013.

\bibitem{neck-sww}
J.~Sawada, A.~Williams, and D.~Wong.
\newblock Necklaces and {L}yndon words in colexicographic and reflected {G}ray
  code order.
\newblock {\em J. Discrete Algorithms}, 46-47:25--35, 2017.

\bibitem{DBLP:conf/isaac/Takaoka99}
T.~Takaoka.
\newblock An {$O(1)$} time algorithm for generating multiset permutations.
\newblock In {\em Algorithms and Computation, 10th International Symposium,
  {ISAAC} '99, Chennai, India, December 16-18, 1999, Proceedings}, volume 1741
  of {\em Lecture Notes in Computer Science}, pages 237--246. Springer, 1999.

\bibitem{vaj}
V.~Vajnovszki.
\newblock More restrictive {G}ray codes for necklaces and {L}yndon words.
\newblock {\em Inf. Process. Lett.}, 106(3):96--99, 2008.

\bibitem{Vajnovszki:2006:LTG:1219189.1219688}
V.~Vajnovszki and T.~Walsh.
\newblock A loop-free two-close {G}ray-code algorithm for listing $k$-ary
  {D}yck words.
\newblock {\em J. Discrete Algorithms}, 4(4):633--648, 2006.

\bibitem{DBLP:conf/dmtcs/Walsh03}
T.~Walsh.
\newblock Generating {G}ray codes in {$O(1)$} worst-case time per word.
\newblock In {\em Discrete Mathematics and Theoretical Computer Science, 4th
  International Conference, {DMTCS} 2003, Dijon, France, July 7-12, 2003.
  Proceedings}, volume 2731 of {\em LNCS}, pages 73--88. Springer, 2003.

\bibitem{Wilf}
H.~S. Wilf.
\newblock A unified setting for sequencing, ranking, and selection algorithms
  for combinatorial objects.
\newblock {\em Adv. Math.}, 24:281--291, 1977.

\bibitem{WilfBook}
H.~S. Wilf and A.~Nijenhuis.
\newblock {\em Combinatorial Algorithms: For Computers and Calculators}.
\newblock Academic Press, 2nd edition, 1978.

\bibitem{GreedyWADS}
A.~Williams.
\newblock The greedy {G}ray code algorithm.
\newblock In {\em 13th International Symposium, {WADS} 2013, London, ON,
  Canada, August 12-14, 2013. Proceedings}, pages 525--536, 2013.

\end{thebibliography}
%==================================================

\newpage

\noindent

{\large \bf Appendix: Proofs for flip-swap languages}
\\

This section provides the proofs for the remainder of the languages in Theorem~\ref{thm:main1}.
%In this section we provides the proofs for flip-swap languages and near flip-swap languages.
%Sections~\ref{subsec:binary-strings}-\ref{subsec:union-intersection} \
%By Theorem~\ref{thm:big}, the function $f$ generates the next string of a the flip-swap language in BRGC order, and the listing of elements of a flip-swap language in BRGC order induces a cyclic $2$-Gray code.
%For near flip-swap languages, the function $f$ generates the next string of ${\bf S} \cup \{0^n\}$ of a near flip-swap languages ${\bf S}$ in BRGC order, and the listing of elements of a near flip-swap languages in BRGC order induces a $2$-Gray code.
%For related concepts such as strings of length $n$ with forbidden $01^t$ or $10^t$, we will focus on the one with respect to $1$.  \red{I do not understand this sentence - can we remove?  Also, may be good to remind reader, that unless otherwise stated, the discussion of flip-swap languages are with respect to 1}
%Denote the set of binary strings of length $n$ by ${\bf B}(n)$.
Unless otherwise stated, the discussion of flip-swap languages are with respect to 1.
%This section shows that each of the combinatorial objects is a flip-swap language.

\subsection*{Binary strings with weight $\leq k$}
Recall the weight of a binary string is the number of 1s it contains.
%The weight of a binary string $\alpha$ is the number of $1$s in $\alpha$.
Let ${\bf S}$ be the set of binary strings of length $n$  having weight less than or equal to some $k$.
%As discussed in Section~\ref{subsec:binary-strings}, applying the flip-first-1 or swap-first-1 operation on a binary string yield another binary string in ${\bf B}(n)$.
%Furthermore, both flip-first-1 and swap-first-1 operations
Observe that ${\bf S}$  satisfies the two closure properties of a flip-swap language as the flip-first and swap-first operations either decrease or maintain the weight.
Thus, ${\bf S}$ is a flip-swap language.
%Observe that flipping the leftmost $1$ of $\alpha$ or swapping the leftmost $1$ of $\alpha \neq 0^{n-1}1$ with the bit on the right does not increase the weight of the resulting string. % and yields a string in ${\bf S}$. %, which flipping the leftmost $1$ actually decrements the weight of $\alpha$ by one.
%Thus ${\bf S}$ satisfies the flip-first and the swap-first properties and is a flip-swap language over ${\bf B}(n)$.
%Therefore, we can list the elements in ${\bf S}$ so that each successive strings differ by at most two bits cyclicly.

%Similarly, the set of binary strings of length $n$ with each string  having weight that is greater than or equal to $k$  is a flip-swap language  with respect to $0$.
%Thus, we can similarly list the elements in the set so that each successive strings differ by at most two bits cyclicly.

\subsection*{Binary strings $\leq \gamma$}
Let ${\bf S}$ be the set of binary strings of length $n$ with each string lexicographically smaller or equal to some string $\gamma$.
Observe that ${\bf S}$ satisfies the two closure properties of a flip-swap language as the flip-first and swap-first operations either make the resulting string lexicographically smaller or produce the same string.
Thus, ${\bf S}$ is a flip-swap language.
%As discussed in Section~\ref{subsec:binary-strings}, applying the flip-first-1 or swap-first-1 operation on a binary string yield another binary string in ${\bf B}(n)$.
%Furthermore, both flip-first-1 and swap-first-1 operations
%Consider a string $\alpha \neq 0^n$ in ${\bf S}$. Observe that flipping the leftmost $1$ of $\alpha$ makes the resulting string lexicographically smaller, while swapping the leftmost $1$ of $\alpha \neq 0^{n-1}1$ with the bit on the right either produces the same string, or a string that is lexicographically smaller.
%Thus the resulting string is lexicographically smaller than $\gamma$ and in ${\bf S}$.
%Thus ${\bf S}$ satisfies the flip-first and the swap-first properties and is a flip-swap language over ${\bf B}(n)$.
%Therefore, we can list the elements in ${\bf S}$ so that each successive strings differ by at most two bits cyclicly. \red{Do we need this statement for all languages? Really we are just proving they are flip-swap languages, so it is a bit redundant}

%Similarly, the set of binary strings of length $n$ with each string lexicographically larger or equal to some string $\gamma$ is a flip-swap language with respect to $0$.
%Thus, we can similarly list the elements in the set so that each successive strings differ by at most two bits cyclicly.

%\subsection{Binary strings with forbidden $10^k$}

\subsection*{Binary strings with $\leq k$ inversions}
Recall that an \emph{inversion} with respect to $0^*1^*$ in a binary string $\alpha = b_1 b_2 \cdots b_n$ is any $b_i = 1$ and $b_j = 0$ such that $i < j$.
For example when $\alpha = 100101$, it has $4$ inversions: $(b_1, b_2), (b_1, b_3), (b_1, b_5), (b_4, b_5)$.
Let ${\bf S}$ be the set of binary strings of length $n$ with less than or equal to $k$ inversions with respect to $0^*1^*$. % and $\alpha =  b_1 b_2 \cdots b_n \neq 0^n$ be a string in ${\bf S}$.
Observe that ${\bf S}$ satisfies the two closure properties of a flip-swap language as the flip-first and swap-first operations either decrease or maintain the number of inversions.
Thus, ${\bf S}$ is a flip-swap language.
%Observe that flipping the leftmost $1$ of $\alpha$ reduces the number of inversions by the number of $0$s in $b_{\ell_\alpha +1} b_{\ell_\alpha+2} \cdots b_n$. %the inversions that involves $a_i$.
%Similarly, swapping the leftmost $1$ of $\alpha \neq 0^{n-1}1$ with the bit on the right either produces the same string, or reduces the number of inversions by one.
%$a_j$ such that $\ell(\alpha) < j$ and $a_j = 0$ reduces the number of inversions by the number of $0$s in $a_{\ell(\alpha)  +1} a_{\ell(\alpha) +2} \cdots a_j$. %$j-\alpha_1$.
%Thus ${\bf S}$ satisfies the flip-first and the swap-first properties and is a flip-swap language over ${\bf B}(n)$.
%Therefore, we can list the elements in ${\bf S}$ so that each successive strings differ by at most two bits cyclicly.

%An {inversion} with respect to $1^*0^*$ in a binary string $\alpha = b_1 b_2 \cdots b_n$ is any $b_i = 0$ and $b_j = 1$ such that $i < j$.
%Similarly, the set of binary strings of length $n$ with less than or equal to $k$ inversions with respect to $1^*0^*$ is a flip-swap language with respect to $0$.
%Thus, we can similarly list the elements in the set so that each successive strings differ by at most two bits cyclicly.

\subsection*{Binary strings with $\leq k$ transpositions}
%The number of inversions of a string can be deemed as the number of ``adjacent-transposition" required to sort the string into the form of $0^*1^*$.
%If we remove the ``adjacent" criteria, then we can consider the number of transpositions required to sort a string into the form $0^*1^*$.
Recall that the number of \emph{transpositions} of a binary string $\alpha = b_1 b_1 \cdots b_n$ with respect to $0^*1^*$ is the minimum number of $swap(i, j)$ operations required to change $\alpha$ into the form $0^*1^*$.
%Hamming distance between $\alpha$ and the string $0^{n-w(\alpha)} 1^{w(\alpha)}$.
For example, the number of transpositions of the string $100101$ is $1$. %requires $4$ ``adjacent-transpositions" or inversions to sort into the form of $0^*1^*$, but it only requires $1$ transpositions to sort it: namely swapping the leftmost $1$ with the $0$ in position $5$.
Let ${\bf S}$ be the set of binary strings of length $n$ with less than or equal to $k$ transpositions with respect to $0^*1^*$. % and $\alpha \neq 0^n$ be a string in ${\bf S}$.
%Observe that flipping the leftmost $1$ of $\alpha$ reduces the number of transpositions of the resulting string by one if $\alpha \neq 0^{n-w(\alpha)} 1^{w(\alpha)}$, while the number of transpositions remains zero if $\alpha$ is of the form $0^*1^*$.
%Similarly, swapping the leftmost $1$ of $\alpha \neq 0^{n-1}1$ with the bit on the right either keeps the number of transpositions of the resulting string the same, or reduces the number of transpositions by one.
%Thus ${\bf S}$ satisfies the flip-first and the swap-first properties and is a flip-swap language over ${\bf B}(n)$.
%Therefore, we can list the elements in ${\bf S}$ so that each successive strings differ by at most two bits cyclicly.
Observe that ${\bf S}$ satisfies the two closure properties of a flip-swap language as the flip-first and swap-first operations either decrease or maintain the number of transpositions.
Thus, ${\bf S}$ is a flip-swap language.

%The number of \emph{transpositions} of a binary string $\alpha = b_1 b_1 \cdots b_n$ with respect to $1^*0^*$ is the minimum number of $swap(i, j)$ operations required to change $\alpha$ into the form $1^*0^*$.
%Similarly, the set of binary strings of length $n$ with less than or equal to $k$ transpositions with respect to $1^*0^*$ is a flip-swap language with respect to $0$.
%Thus, we can similarly list the elements in the set so that each successive strings differ by at most two bits cyclicly.

\subsection*{Binary strings $< \text{or} \leq$ their reversal}
Let ${\bf S}$ be the set of binary strings of length $n$ with each string lexicographically smaller than their reversal. %and $\alpha \neq 0^n$ be a string in ${\bf S}$.
%Consider a string $\alpha \neq 0^n$ in ${\bf S}$.
%Notice that ${\bf S}$ is not a flip-swap language since $0^{n-1}1$ is a string in ${\bf S}$ but $0^n$ is not.
%We now show that ${\bf S}$ is a near flip-swap language.
Observe that ${\bf S}$ satisfies the swap-first property as the swap-first operation either produces the same string, or makes the resulting sting lexicographically smaller while its reversal lexicographically larger.
%Observe that swapping the leftmost $1$ of $\alpha \neq 0^{n-1}1$ with the bit on the right either produces the same string, or makes the resulting sting lexicographically smaller while its reversal lexicographically larger. Thus ${\bf S}$ satisfies the swap-first property.
Furthermore, ${\bf S} \cup \{0^n\}$ satisfies the flip-first property as the flip-first operation complements the most significant bit of $\alpha$ but the least significant bit of its reversal when $w(\alpha) > 1$; or otherwise produces the string $0^n$ when $w(\alpha) = 1$.
Thus, ${\bf S}$ is a flip-swap language. % and ${\bf S}$ is a near flip-swap language.
%Since $\alpha$ is lexicographically smaller than (or equal to) its reversal, the resulting string must also be less than its reversal when $w(\alpha) > 1$; or otherwise produces the string $0^n$ when $w(\alpha) = 1$.
%Thus ${\bf S} \cup \{0^n\}$ also satisfies the flip-first property and ${\bf S}$ is a near flip-swap language over ${\bf B}(n)$.
%Such
%Therefore, we can list the elements in ${\bf S}$ so that each successive strings differ by at most two bits.
%On the contrary, the set of binary strings of length $n$ with each string lexicographically smaller than or equal to their reversal is a flip-swap language since now $0^n$ is in the set. 
The proof for the set of binary strings of length $n$ with each string lexicographically smaller than or equal to their reversal is similar to the proof for ${\bf S}$.

Equivalence class of strings under reversal has also been called neckties~\cite{Savage:1997:SCG:273590.273592}.

%Similarly, the set of binary strings of length $n$ with each string lexicographically larger than their reversal is a near flip-swap language with respect to $0$, while the set of binary strings of length $n$ with each string lexicographically larger than or equal to their reversal is a flip-swap language with respect to $0$.
%Thus, we can similarly list the elements in the set so that each successive strings differ by at most two bits.
%Therefore binary strings with less than or equal to $k$ transpositions with respect to $1^*0^*$ form a first-$0$ $2$-BRGC language.
%Similarly, the set of binary strings with each string lexicographically smaller than their reversal is also closed under the swap-first-$1$ operation.
%Further, flipping the leftmost $1$ of $\alpha$ produces

%\subsection{Binary strings without the prefix $1\beta$}

\subsection*{Binary strings $< \text{or} \leq$ their complemented reversal}\label{subsec:complement}
Let ${\bf S}$ be the set of binary strings of length $n$ with each string lexicographically smaller than (or equal to) its complemented reversal.  %and $\alpha \neq 0^n$ be a string in ${\bf S}$.
%Consider a string $\alpha \neq 0^n$ in ${\bf S}$.
%Observe that flipping the leftmost $1$ of $\alpha$ makes the resulting string lexicographically smaller while its complemented reversal lexicographically larger when $w(\alpha) > 1$; or otherwise produces the string $0^n$ when $w(\alpha) = 1$.
Observe that ${\bf S} $ satisfies the flip-first property as the  flip-first operation makes the resulting string lexicographically smaller while its complemented reversal lexicographically larger.
%Thus  ${\bf S} \cup \{0^n\}$ satisfies the flip-first property.
Furthermore, ${\bf S} $ satisfies the swap-first property as the swap-first operation either produces the same string, or complements the most significant bit of $\alpha$ and also a $1$ of its complemented reversal.
Thus, the resulting string must also be less than its complemented reversal.
Thus, ${\bf S}$ is a flip-swap language.
%Thus  ${\bf S} \cup \{0^n\}$ also satisfies the swap-first property and  ${\bf S}$ is a near flip-swap language over ${\bf B}(n)$.
%Therefore, we can list the elements in ${\bf S}$ so that each successive strings differ by at most two bits.

%Similarly, the set of binary strings of length $n$ with each string lexicographically larger than (or equal to) its complemented reversal is a flip-swap language with respect to $0$.
%Thus, we can similarly list the elements in the set so that each successive strings differ by at most two bits.

\subsection*{Binary strings with forbidden $10^t$}\label{subsec:forbidden}
Let ${\bf S}$ be the set of binary strings of length $n$ without the substring $10^t$. %and $\alpha \neq 0^n$ be a string in ${\bf S}$.
%Consider a string $\alpha \neq 0^n$ in ${\bf S}$.
Observe that ${\bf S}$ satisfies the two closure properties of a flip-swap language as the flip-first and swap-first operations do not create the substring $10^t$.
Thus, ${\bf S}$ is a flip-swap language.
%Observe that flipping the leftmost $1$ of $\alpha$ does not create the substring $10^t$ and thus ${\bf S}$ satisfies the flip-first property.
%Similarly, swapping the leftmost $1$ of $\alpha \neq 0^{n-1}1$ with the bit on the right either produces the same string, or decrements the $0$s of the first substring of the format $10^*$.
%Thus ${\bf S}$ also satisfies the swap-first property and is a flip-swap language over ${\bf B}(n)$.
%Therefore, we can list the elements in ${\bf S}$ so that each successive strings differ by at most two bits cyclicly.

%Similarly, the set of binary strings of length $n$ without the substring $01^t$ is a flip-swap language with respect to $0$.
%Thus, we can similarly list the elements in the set so that each successive strings differ by at most two bits cyclicly.

\subsection*{Binary strings with forbidden prefix $1\gamma$}\label{subsec:forbidden-prefix}
Let ${\bf S}$ be the set of binary strings of length $n$ without the prefix $1\gamma$. % and $\alpha \neq 0^n$ be a string in ${\bf S}$.
%Consider a string $\alpha \neq 0^n$ in ${\bf S}$.
Observe that ${\bf S}$ satisfies the two closure properties of a flip-swap language as the flip-first and swap-first operations either create a string with the prefix $0$ or produce the same string.
Thus, ${\bf S}$ is a flip-swap language.
%Observe that flipping the leftmost $1$ of $\alpha$ creates a string that starts with the prefix $0$, that is a string without the prefix $1\gamma$ and thus ${\bf S}$ satisfies the flip-first property.
%Similarly, swapping the leftmost $1$ of $\alpha \neq 0^{n-1}1$ with the bit on the right either produces the same string, or produces a string that starts with the prefix $0$.
%Thus ${\bf S}$ also satisfies the swap-first property and is a flip-swap language over ${\bf B}(n)$.
%Therefore, we can list the elements in ${\bf S}$ so that each successive strings differ by at most two bits cyclicly.

%Similarly, the set that of binary strings of length $n$ without the prefix $0\gamma$ is a flip-swap language with respect to $0$.
%Thus, we can similarly list the elements in the set so that each successive strings differ by at most two bits cyclicly.

%The set ${\bf S}$ is closed under the flip-first-1 operation since

\subsection*{Lyndon words}
%A string $\alpha$ is \emph{periodic} if $\alpha = \beta^t$ for some string $\beta$ and $t \geq 2$.
%If a string is not periodic, it is said to be \emph{aperiodic}.
%A \emph{necklace} is the lexicographically smallest string in equivalence class of strings under rotation.
%A \emph{Lyndon word} is an aperiodic necklace.
%Let ${\bf S}$ be the set of Lyndon words of length $n$.
Let ${\bf L}(n)$ denote the set of Lyndon words of length $n$.
%Notice that ${\bf L}(n)$ is not a flip-swap language since $0^{n-1}1$ is a Lyndon word but $0^n$ is not.
%We now show that ${\bf L}(n)$ is a near flip-swap language over ${\bf B}(n)$.
%In Section~\ref{subsec:necklaces} we prove that applying the flip-first or the swap-first operation on a necklace yields a necklace.
Since ${\bf N}(n)$ is a flip-swap language and ${\bf L}(n) \cup \{0^n\} \subseteq {\bf N}(n)$, %is a subset of the set of necklaces,
it suffices to show that applying the flip-first or the swap-first operation on a Lyndon word either yields an aperiodic string or the string $0^n$. % that is aperiodic.
%the necklaces produced by applying the operations are aperiodic.

Clearly ${\bf L}(n) \cup \{0^n\}$ satisfies the two closure properties of a flip-swap language when $\alpha \in \{0^n, 0^{n-1}1\}$. Thus in the remaining of the proof, $\alpha \notin \{0^n, 0^{n-1}1\}$.
We first prove by contradiction that ${\bf L}(n) \cup \{0^n\}$ satisfies the flip-first closure property.
Let $\alpha = 0^j 1 b_{j+2} b_{j+3}\cdots b_n$ be a string in ${\bf L}(n) \cup \{0^n\}$.
%It is easy to see that $flip(\alpha, s(\alpha)) \in {\bf L} \cup \{0^n\}$ when $\alpha \in \{0^n, 0^{n-1}1\}$.
%Thus in the remaining of the proof we assume $\alpha \notin \{0^n, 0^{n-1}1\}$.
%If $\alpha = 0^{n-1}1$, then $flip(\alpha, s(\alpha)) = 0^n \in {\bf L} \cup \{0^n\}$ and thus ${\bf L} \cup \{0^n\}$ is closed under the flip-first-$1$ operation when $\alpha = 0^{n-1}1$.
Suppose that ${\bf L}(n) \cup \{0^n\}$ does not satisfy the flip-first closure property and $\FLIP{\alpha}{\ell_\alpha}$ is periodic.
Thus $\FLIP{\alpha}{\ell_\alpha} = (0^{j+1} \beta)^t$ for some string $\beta$ and $t\geq 2$.
Observe that $\alpha = 0^{j}1 \beta (0^{j+1} \beta)^{t-1}$ which is clearly not a Lyndon word, a contradiction.
Therefore ${\bf L}(n) \cup \{0^n\}$ satisfies the flip-first closure property.

Then similarly we prove by contradiction that ${\bf L}(n) \cup \{0^n\}$ satisfies the swap-first property.
If $b_{j+2} = 1$, then applying the swap-first operation on $\alpha$ produces the same Lyndon word.
Thus in the remaining of the proof, $b_{j+2} = 0$.
Suppose that ${\bf L}(n) \cup \{0^n\}$ does not satisfy the swap-first closure property such that $\alpha \in {\bf L}(n) \cup \{0^n\}$ but $\SWAP{\alpha}{\ell_\alpha}{\ell_\alpha+1}$ is periodic.
Thus $\SWAP{\alpha}{\ell_\alpha}{\ell_\alpha +1} = (0^{j+1}1 \beta)^t$ for some string $\beta$ and $t \geq 2$.
Thus $\alpha$ contains the prefix $0^{j}1$ but also the substring $0^{j+1}1$ in its suffix which is clearly not a Lyndon word, a contradiction.
%$\alpha = 0^{j}1 \gamma (0^{j+1}1 \beta)^{t-1}$ for some string $\gamma$ %and has the prefix $0^{j} 1$ but also contains the substring $0^{j+1}$,
%Thus, ${\bf L}(n) \cup \{0^n\}$ is a flip-swap language and ${\bf L}(n)$  is a near flip-swap language.
Thus, ${\bf L}(n)$ is a flip-swap language.
%Therefore, we can list Lyndon words of length $n$ so that each successive Lyndon words differ by at most two bits.

%Similarly, the set that contains each of the largest string in equivalence class of aperiodic strings of length $n$ under rotation is a near flip-swap language  with respect to $0$. %Thus, we can similarly list the elements in the set so that each successive strings differ by at most two bits.

In~\cite{vaj}, Vajnovszki proved that the BRGC order induces a cyclic $2$-Gray code for the set of Lyndon words of length $n$.

\subsection*{Prenecklaces}

Recall that a string $\alpha$ is a \emph{prenecklace} if it is a prefix of some necklace.
%Let ${\bf S}$ be the set of length $n$ prenecklaces.
In Section~\ref{sec:flip-swap} we prove that applying the flip-first or the swap-first operation on a necklace yields a necklace.
Thus by the definition of prenecklace, applying the flip-first or the swap-first operation on a prenecklace also creates a string that is a prefix of a necklace.
Thus, the set of prenecklaces of length $n$ is a flip-swap language.
%Thus the set of prenecklaces of length $n$ is closed under the flip-first and the swap-first operations and is a flip-swap language over ${\bf B}(n)$. %member of ${\bf L}$. %${\bf S} \in {\bf L}$.
%Therefore, we can list prenecklaces of length $n$ so that each successive prenecklaces differ by at most two bits cyclicly.

%Similarly, the set that contains length $n$ prefixes of each of the largest string in equivalence class of binary strings under rotation is a flip-swap language with respect to $0$.
%Thus, we can similarly list the elements in the set so that each successive strings differ by at most two bits cyclicly.

\subsection*{Pseudo-necklaces}\label{subsec:pseudonecklaces}
Recall that a \emph{block} with respect to $0^*1^*$ is a maximal substring of the form $0^*1^*$.
Each block $B_i$ with respect to $0^*1^*$ can be represented by two integers $(s_i, t_i)$ corresponding to the number of $0$s and $1$s respectively. For example, the string $\alpha = 000110100011001$ can be represented by $B_4B_3B_2B_1 = (3, 2)(1, 1)(3, 2)(2, 1)$.
A block $B_i = (s_i, t_i)$ is said to be \emph{lexicographically smaller} than a block $B_j = (s_j, t_j)$ (denoted by $B_i < B_j$) if $s_i < s_j$ or $s_i = s_j$ with $t_i < t_j$.

A string $\alpha = b_1b_2 \cdots b_n = B_b B_{b-1} \cdots B_1$ is a \emph{pseudo-necklace} with respect to $0^*1^*$ if $B_b \leq B_i$ for all $1 \leq  i < b$.
Observe that the set of pseudo-necklaces of length $n$ satisfies the two closure properties of a flip-swap language as the flip-first and swap-first operations do not make the first block $B_b$ lexicographically larger, while the remaining blocks either remain the same or become lexicographically larger.
Thus, the set of pseudo-necklaces of length $n$ is a flip-swap language.
%Thus the set of pseudo-necklaces of length $n$ satisfies the first-flip and the swap-first properties and is a flip-swap language over ${\bf B}(n)$.
%Therefore, we can list pseudo-necklaces of length $n$ so that each successive pseudo-necklaces differ by at most two bits cyclicly.
%Pseudo-necklaces were first defined in [11] and they are used as a stepping stone in our algorithms for necklaces and Lyndon words.

%Similarly, the set that contains
%A {block} with respect to $1^*0^*$ is a maximal substring of the form $1^*0^*$.
%Similarly, each block $B_i$ with respect to $1^*0^*$ can be represented by two integers $(t_i, s_i)$ corresponding to the number of $1$s and $0$s respectively.
%A string $\alpha = b_1b_2 \cdots b_n = B_b B_{b-1} \cdots B_1$ is a {pseudo-necklace} with respect to $1^*0^*$ if $B_b \geq B_i$ for all $1 \leq  i < b$.
%Similarly, the set of pseudo-necklaces of length $n$ with respect to $1^*0^*$ is a flip-swap language with respect to $0$.
%Thus, we can similarly list elements of the set so that each successive pseudo-necklaces differ by at most two bits cyclicly.

In~\cite{neck-sww}, the authors proved that the BRGC order induces a cyclic $2$-Gray code for the set of pseudo-necklaces of length $n$.

%\subsection{Dyck words}
\begin{comment}
\subsection*{Prefix normal words}
A binary string $\alpha$ is \emph{prefix normal} with respect to $0$ (also known as $0$-prefix normal word) if no substring of $\alpha$ has more $0$s than its prefix of the same length.
For example, the string 001010010111011 is a $0$-prefix normal word but the string 001010010011011 is not because it has a substring of length $5$ with four $0$s while the prefix of length $5$ has only three $0$s.
%There has been much interest recently on prefix normal words which have applications in binary jumbled pattern matching~\cite{DBLP:conf/fun/BurcsiFLRS14,DBLP:conf/cpm/BurcsiFLRS14,DBLP:journals/tcs/BurcsiFLRS17,DBLP:conf/dlt/FiciL11,pnw-lex}.

Observe that the set of $0$-prefix normal words of length $n$ satisfies the two closure properties of a flip-swap language as the flip-first and swap-first operations either increases or maintain the number of $0$s in its prefix. %, while the swap-first operation also decreases the number of $0$s of its suffix.
Thus, the set of $0$-prefix normal words of length $n$ is a flip-swap language.

%Thus the set of prefix normal words of length $n$ with respect to $0$ satisfies the first-flip and the swap-first properties and is a flip-swap language over ${\bf B}(n)$.
%Therefore, we can list elements of the set so that each successive strings differ by at most two bits cyclicly.

%For the swap-first-$1$ operation,
A binary string $\alpha$ is {prefix normal} with respect to $1$ (also known as $1$-prefix normal word) if no substring of $\alpha$ has more $1$s than its prefix of the same length.
Similarly, the set of $1$-prefix normal words of length $n$ is a flip-swap language with respect to $0$.
%Thus, we can similarly list elements of the set so that each successive strings differ by at most two bits cyclicly.
\end{comment}

\subsection*{Left factors of $k$-ary Dyck words}
Recall that a \emph{$k$-ary Dyck word} is a binary string of length $n = tk$ with $t$ copies of $1$ and $t(k - 1)$ copies of $0$ such that every prefix has at most $k - 1$ copies of $0$ for every $1$.
It is well-known that $k$-ary Dyck words are in one-to-one correspondence with $k$-ary trees with $t$ internal nodes.
When $k = 2$, Dyck words are counted by the Catalan numbers and are equivalent to \emph{balanced parentheses}.
%A string is a \emph{balanced parentheses string} if it contains an equal number of open parentheses and closed parentheses, and every prefix of the string contains at least as many open parentheses as closed parentheses.
As an example, $110100$ is a $2$-ary Dyck word and is also a balanced parentheses string while $100110$ is not a $2$-ary Dyck word nor a balanced parentheses because its prefix of length three contains more $0$s than $1$s.
%A balanced parentheses string can be represented by a binary string using $0$ and $1$ to represent open parentheses and closed parentheses respectively.
%For example, the balanced parentheses string $(()())$ can be represented by the binary string $001011$.
$k$-ary Dyck words and balanced parentheses strings are well studied and have lots of applications including trees and stack-sortable permutations~\cite{Bultena:1998:EAW:306049.306064,Ruskey199068,Ruskey:2008:GBP:1379361.1379382,Vajnovszki:2006:LTG:1219189.1219688}.

The set of $k$-ary Dyck words of length $n$ is not a flip-swap language with respect to $0$ since $110100$ is a $2$-ary Dyck word but $111100$ is not.
The set of length $n$ prefixes of $k$-ary Dyck words is, however, a flip-swap language with respect to $0$.
This set is also called \emph{left factors of $k$-ary Dyck words}.
Let ${\bf S}$ be the set of left factors of $k$-ary Dyck words. %and $\alpha \neq 0^n$ be a string in ${\bf S}$.
%Consider a string $\alpha \neq 0^n$ in ${\bf S}$.
Observe that ${\bf S}$ satisfies the two closure properties of a flip-swap language with respect to $0$ as the flip-first and swap-first operations do not increase the number $0$s in the prefix.
Thus, ${\bf S}$ is a flip-swap language with respect to $0$.

\end{document}